\newtheorem{theorem}{Theorem}[section]
\newtheorem{lemma}[theorem]{Lemma}
\newtheorem{corollary}[theorem]{Corollary}
\newtheorem{proposition}[theorem]{Proposition}
\theoremstyle{definition}
\newtheorem{definition}[theorem]{Definition}
\newtheorem{remark}[theorem]{Remark} 
\numberwithin{equation}{section}
\newcommand\C{\mathbf{C}} 
\newcommand\R{\mathbf{R}}
\newcommand\Q{\mathbf{Q}}
\newcommand\Z{\mathbf{Z}}
\newcommand\twomatr[4]{\begin{pmatrix}#1&#2\\ #3&#4 \end{pmatrix}}
\newcommand\stwomatr[4]{\left(\begin{smallmatrix}#1&#2\\  
                               #3&#4 \end{smallmatrix}\right)}
\newcommand\tensor{\otimes}
\newcommand\isomorphic{\cong}
\DeclareMathOperator{\tr}{tr}
\DeclareMathOperator{\Real}{Re}
\newcommand\union{\cup}
\newcommand\abs[1]{{\left|#1\right|}}
\newcommand\innerprod[2]{\left\langle#1,#2\right\rangle}
\newcommand\cuspforms{\mathcal{S}}
\newcommand\eisforms{\mathcal{E}}
\newcommand\Half{\mathcal{H}} 
\newcommand\dHz{d_\Half z} 
\newcommand\ipperiod[2]{[{#1},{#2}]}
\newcommand\ipGamma[2]{\innerprod{#1}{#2}_\Gamma}
\newcommand\ipHalf[2]{\innerprod{#1}{#2}_\Half}
\newcommand\Funddomain{\mathcal{F}}
\newcommand\matI{\mathcal{I}}
\newcommand\matN{\mathcal{N}}
\begin{document}

\title{Periods of modular forms and identities between Eisenstein series}

\author{Kamal Khuri-Makdisi}
\address{Mathematics Department,
American University of Beirut, Bliss Street, Beirut, Lebanon}
\email{kmakdisi@aub.edu.lb}

\author{Wissam Raji}
\address{Mathematics Department,
American University of Beirut, Bliss Street, Beirut, Lebanon}
\email{wr07@aub.edu.lb}

\keywords{Modular forms, periods, Eisenstein series}
\subjclass[2000]{11F67, 11F11}
\thanks{January 17, 2016}

\begin{abstract}
Borisov and Gunnells observed in 2001 that certain linear relations
between products of two holomorphic weight 1 Eisenstein series had the
same structure as the relations between periods of modular forms; a
similar phenomenon exists in higher weights.  We give a conceptual
reason for this observation in arbitrary weight.  This involves an
unconventional way of expanding the Rankin-Selberg convolution of a
cusp form with an Eisenstein series.  We also prove a partial result
towards understanding the action of a Hecke operator on a product of two
Eisenstein series.
\end{abstract}

\maketitle

\section{Introduction}
\label{section1}

Let $N \geq 3$.  In a series of
articles~\cite{BorisovGunnells,BorisovGunnellsNonvanishing,BorisovGunnellsHigherWeight},
L. Borisov and P. Gunnells introduced the concept of a toric modular
form on $\Gamma_1(N)$, defined in terms of lattices and polytopes.
They proved that toric modular forms were polynomials in certain
Eisenstein series of weight~$1$, and identified the cuspidal part of
the algebra generated by these polynomials in weight~$1$ Eisenstein
series: the cuspidal part contained everything in weights~$\geq 3$,
and contained precisely the span of Hecke eigenforms in weight~$2$
with nontrivial central $L$-value.

In~\cite{KKMModuliInterpretation}, the first named author of this
article investigated a different way of producing modular forms
on~$\Gamma(N)$ coming from Laurent expansions of elliptic functions
and the moduli problem of elliptic curves with $N$-torsion,
and showed that this also led to a ring of modular forms generated by
Eisenstein series of weight~$1$.  In full level~$\Gamma(N)$, this ring
contains every modular form of weight~$2$ and above.

In both approaches, a key fact underlying the proofs
is an identity between products of pairs of Eisenstein series.  For
weight~$1$ Eisenstein series,
this identity states that if $\lambda + \mu + \nu = 0$,
then $E_{1,\lambda} E_{1,\mu} + E_{1,\mu} E_{1,\nu} + E_{1,\nu}
E_{1,\lambda} \in \eisforms_2$, the space of weight~$2$ Eisenstein
series.  (The precise definition of the Eisenstein series is given
below, in~\eqref{equation1.1} and~\eqref{equation1.2}.)  By
Remark~3.12 of~\cite{BorisovGunnellsNonvanishing}, this relation is
parallel to Manin's three-term relation (see~\eqref{equation1.7}
below) between modular symbols.  For Eisenstein series of higher
weight, the analogous result is in Section~6
of~\cite{BorisovGunnellsHigherWeight}.

The parallelism between Manin's relations on modular symbols on one hand,
and linear relations between products of pairs of Eisenstein series on
the other hand, was extended and codified by Pa\c{s}ol
in~\cite{Pasol}.  But this parallelism still remained an experimental
observation, despite the feeling that it was not just coincidental.

In this article, we explain this parallelism conceptually, based on
an interesting expression that we derive in
Proposition~\ref{proposition2.5} for the Petersson inner product  
$\innerprod{f}{E_{\ell,\lambda} E_{m,\mu}}$ between a cusp form $f$ of
weight $k=\ell+m$, and a product of two Eisenstein series.  Our result
expresses the inner product in terms of pairings between $f$ and
certain modular symbols (i.e., in terms of periods of $f$).  This
makes the connection with relations between modular symbols
transparent, and gives a direct way to understand the above
parallelism, which we describe in Theorem~\ref{theorem2.8}; the
resulting identity, however, can include nonholomorphic Eisenstein
series, due to the presence of terms like $E_{2,\lambda}$ in various
products.  In Section~\ref{section3}, we translate the identity of
Theorem~\ref{theorem2.8} to the setting of holomorphic Eisenstein
series, so that every Eisenstein series of weight~$2$ appears as a
difference, as in $E_{2,\lambda} - E_{2,\mu}$.  We also make the
connection with the results of~\cite{Pasol} and with classical
identities involving elliptic functions. 
Finally, in Section~\ref{section4}, we include a partial result
related to the Hecke action; Section~7 
of~\cite{BorisovGunnellsNonvanishing,BorisovGunnellsHigherWeight}
in fact shows that their homomorphism from the space of modular
symbols to the cuspidal space of toric modular forms respects the
Hecke action (the fact that the homomorphism is well-defined in the
first place is due to the parallelism with the Manin relations).  We
do not know whether the methods there extend to $\Gamma(N)$.  What we
are able to accomplish in this article is to follow a proof
in Section~4 of~\cite{KKMModuliInterpretation} that yields good
behavior of only certain traces from higher level $\Gamma(NM)$ to
$\Gamma(N)$.  That result, combined with a certain ``subtle symmetry''
in the space of weight~$1$ Eisenstein series on~$\Gamma(N)$, was
sufficient to deduce the result for all traces in weights $2$ and~$3$
in~\cite{KKMModuliInterpretation}, 
but would need an extra idea in order to work for higher weights.

It is reasonable to expect that $\innerprod{f}{E_{\ell,\lambda} E_{m,\mu}}$
should be related to periods of $f$.  For instance, if $f$ is a
newform of weight~$k$, then as is well known from Theorem~2 and equation~(4.3)
of~\cite{ShimuraSpecialValues}, for many choices of Dirichlet
characters $\chi_1$, $\chi_2$ and integers $\ell + m = k$, it is possible
to find suitable Eisenstein series $G_\ell \in \eisforms_\ell, G_m \in
\eisforms_m$ for which $\innerprod{f}{G_\ell G_m} 
   = L(\ell, f \tensor \chi_1) L(m, f \tensor \chi_2)$.
This is the usual method of Rankin-Selberg, which involves unfolding
the integral along the sum of one Eisenstein series, while using the
Fourier ($q$-) expansions of $f$ and the second Eisenstein series.
Moreover, the special values of the twisted $L$-function of $f$ can be
expressed as periods.  However, there is no way for such an identity
to hold for arbitrary $f$ that is not a Hecke eigenform: most notably,
the inner product $\innerprod{f}{G_\ell G_m}$ is linear in~$f$, while
the product $L(\ell, f \tensor \chi_1) L(m, f \tensor \chi_2)$ of
special values is quadratic in $f$.  Instead, we unfold the inner
product along the simultaneous sums of both Eisenstein series, and
obtain an infinite sum of integrals of the form $\int_{\Half} f(z)
(az+b)^{-\ell} (cz+d)^{-m} y^k\, \dHz$.  These integrals have already
been related to periods of $f$, by work of H. Cohen as cited in
pp.~204--205 of~\cite{KohnenZagier}.  In our setting, we actually need
to evaluate the result after replacing $(az+b)^{-\ell}$ by
$(az+b)^{-\ell}\abs{az+b}^{-s}$, and similarly for $(cz+d)^{-m}$; this
is due to convergence issues with the Eisenstein series.  We do not
believe that the integral has been carried out in precisely this form
before, and hope that the computation will be of independent interest.

We speculate that the parallelism between the structure of the Manin
relations and the structure of some relations between Eisenstein
series might lead to a different way to produce spaces of modular
forms, once a space of modular symbols has been computed.

\subsection*{Notation}
In the rest of the introduction, we fix notation and recall various
standard facts.

We first recall the definition of the Eisenstein series on
$\Gamma = \Gamma(N)$.  This works for any level $N \geq 1$, although
at various places we assume $N \geq 3$, to avoid the minor
inconvenience of having $\stwomatr{-1}{0}{0}{-1}$ belong to $\Gamma$.
The Eisenstein series will be parametrized by an element
$\lambda = (\lambda_1,\lambda_2) \in N^{-1}\Z^2$; in fact, only the
image of $\lambda$ in $\Q^2 / \Z^2$ matters, but the level definitely
depends on the denominator of $\lambda$.  We then define as usual an
Eisenstein series of weight $\ell \geq 1$ by
\begin{equation}
\label{equation1.1}
E_{\ell,\lambda}(z,s) =
   \sum_{\substack{(a,b) \equiv \lambda \> (\text{mod}\> \Z^2)\\
                   (a,b) \neq (0,0)\\
                  }}
   (az+b)^{-\ell} |az+b|^{-s} y^{s/2}.
\end{equation}
As usual, $z = x+iy \in \Half$, the complex upper half plane.  The series
in~\eqref{equation1.1} converges absolutely for $\Real s + \ell > 2$,
and has an analytic continuation for all $s \in \C$.  It is well known
that, in fact, $\Gamma((s/2) + \ell) E_{\ell,\lambda}(z,s)$ is an
entire function of $s$; see for example Theorem~9.7
of~\cite{ShimuraElementary} (note that $s$ there corresponds to $s/2$
here).  We write
\begin{equation}
\label{equation1.2}
E_{\ell,\lambda}(z) = E_{\ell,\lambda}(z,0).
\end{equation}
The Eisenstein series of~\eqref{equation1.2} are holomorphic functions
of $z$ except when $\ell = 2$, in which case they have the form
$E_{2,\lambda}(z) = -\pi y^{-1} + \text{holomorphic function}$.  Thus
$E_{2,\lambda} - E_{2,\mu}$ is actually a holomorphic function of
$z$.  In general, we have used the notation $\eisforms_\ell$ above to
refer to the space of (holomorphic) forms in the span of 
all $E_{\ell,\lambda}$, as $\lambda$ varies in $N^{-1}\Z^2$; the
holomorphy is automatic except in weight~$2$.

For precision, we normalize the action of an element $\gamma =
\stwomatr{a}{b}{c}{d} \in GL(2,\R)^+$ on a function
$f: \Half \to \C$ so that it factors through 
$PGL(2)$, or more accurately through $GL(2,\R)^+/Z^0$, where $Z^0
\isomorphic \R^{+*}$ is the connected component of the identity in the
center $Z$ of $GL(2,\R)$:
\begin{equation}
\label{equation1.3}
(f |_k \gamma)(z) = f(\gamma z) (cz+d)^{-k} (\det \gamma)^{k/2}.
\end{equation}
Thus, for example, an element $f \in \cuspforms_k$, the space of
holomorphic cusp forms of weight~$k$ and level~$N$, will satisfy $f
|_k \gamma = f$ for all $\gamma \in \Gamma(N)$; similarly for
$\eisforms_k$.  However, we will frequently need to use this notation
for all sorts of functions $f$ on~$\Half$, and the matrix $\gamma$
can have any positive determinant in many occasions below.

We also distinguish notation for various pairings.  For $f,g$
transforming with the same weight $k$ under a congruence subgroup
$\Gamma$, the Petersson inner product is
\begin{equation}
\label{equation1.4}
\ipGamma{f}{g} = \int_{z \in \Funddomain} f(z) \overline{g(z)} y^k \,
\dHz
\end{equation}
whenever the integral converges, where $\Funddomain$ is a fundamental
domain for $\Gamma\backslash\Half$, and the $GL(2,\R)^+$-invariant
measure on $\Half$ is as usual $\dHz = y^{-2}\, dx dy$, with $z = x+iy$.
As usual, the spaces $\cuspforms_k$ and $\eisforms_k$ are
orthogonal with respect to the Petersson inner product.

In the above definition, unlike the beginning of the introduction, we
have included a subscript~$\Gamma$ in the Petersson
inner product.  This is so as to accentuate the difference between the
Petersson inner product and the full integral over $\Half$, which
makes several appearances in this article:
\begin{equation}
\label{equation1.5}
\ipHalf{f}{g} = \int_{z \in \Half} f(z) \overline{g(z)} y^k \,
\dHz.
\end{equation}

Finally, we define the period pairing for $f \in \cuspforms_k$ and $P(z)$ a
polynomial of degree $\leq w$, where we always write $w = k-2$:
\begin{equation}
\label{equation1.6}
\ipperiod{f(z)}{P(z)} = \int_{z=0}^{i\infty} f(z) P(z) \, dz.
\end{equation}
The Manin relations, which follow from the Cauchy integral theorem, state that
\begin{equation}
\label{equation1.7}
\ipperiod{f}{P} + \ipperiod{f|_k \sigma}{P|_{-w} \sigma} = 0,
\qquad
\ipperiod{f}{P} + \ipperiod{f|_k \tau}{P|_{-w} \tau}
                + \ipperiod{f|_k \tau^2}{P|_{-w} \tau^2} = 0,
\end{equation}
where
\begin{equation}
\label{equation1.8}
\sigma = \twomatr{0}{-1}{1}{0},
\qquad
\tau = \twomatr{0}{1}{-1}{-1}.
\end{equation}
The Manin relations are usually stated in terms of an identity between
symbols, as in~\cite{Merel} for example, but we will only need the period pairing
with cusp forms in this article.

\section{Main computation}
\label{section2}

Let $\ell,m \geq 1$ such that $\ell+m = k$.  Take $\lambda,\mu \in
N^{-1}\Z^2$, and take a weight $k$ cusp form $f \in
\cuspforms_k(\Gamma)$ where we fix the notation $\Gamma = \Gamma(N)$.
Our goal in this section is to relate the Petersson inner product
$\ipGamma{f(z)}
{E_{\ell,\lambda}(z,\overline{p}) E_{m,\mu}(z,\overline{q})}$
to periods of $f$.  As a first step, we study the integrals that arise
when we expand the Eisenstein series.

\begin{definition}
\label{definition2.1}
Let $\ell,m,k,p,q$ be as above (in particular, $\ell+m=k$), and take a
cusp form $f \in \cuspforms_k$.  Let $a,b,c,d \in \Z$ and write $M =
\stwomatr{a}{b}{c}{d}$.  (The matrix $M$ is allowed to be singular.) Let $S
\subset \Half$ be a measurable subset --- typically, $S$ is either
$\Half$ or a fundamental domain for a congruence subgroup.  We then define
\begin{equation}
\label{equation2.1}
\begin{split}
& I_{M,S,f,\ell,m,p,q,k}
 = I_{M,S,f} \\
&\quad = \int_{z\in S}
      f(z)
      (a\overline{z} + b)^{-\ell} \abs{az+b}^{-p}
      (c\overline{z} + d)^{-m} \abs{cz+d}^{-q}
      y^{p/2+q/2+k} \,\dHz,
\end{split}
\end{equation}
whenever the integral converges absolutely.
\end{definition}
Two particular values of the matrix $M$ will play a prominent role in
what follows:
\begin{equation}
\label{equation2.1.5}
\matI = \twomatr{1}{0}{0}{1},
\qquad
\matN = \twomatr{0}{1}{1}{0}.
\end{equation}
The names stand for ``identity'' and ``negative determinant'', respectively.
\begin{proposition}
\label{proposition2.2}
The integrals $I_{M,S,f}$ have the following properties:
\begin{enumerate}
\item
For $\gamma \in GL(2,\R)^+$, we have
\begin{equation}
\label{equation2.2}
I_{M \gamma,S,f} = (\det \gamma)^{-(p+q+k)/2}
                  I_{M, \gamma S, f |_k \gamma^{-1}}.
\end{equation}
\item
If $M=\matI$, then
\begin{equation}
\label{equation2.3}
I_{\matI,\Half,f} = 
i^\ell \pi 2^{-k-p/2-q/2+2}
\frac{\Gamma(k+p/2+q/2-1)}{\Gamma(p/2+\ell)\Gamma(q/2+m)}
\int_{y=0}^\infty f(iy) y^{m-1-p/2+q/2}\,dy.
\end{equation}
In the special case $p=q=s$, this yields
\begin{equation}
\label{equation2.4}
I_{\matI,\Half,f} = i^{k-2} \pi 2^{-k-s+2}
\frac{\Gamma(k+s-1)}{\Gamma(s/2+\ell)\Gamma(s/2+m)}
\ipperiod{f}{(-z)^{m-1}}.
\end{equation}
\item
If $M = \matN$, then
\begin{equation}
\label{equation2.5}
I_{\matN,\Half,f} = 
i^m \pi 2^{-k-p/2-q/2+2}
\frac{\Gamma(k+p/2+q/2-1)}{\Gamma(p/2+\ell)\Gamma(q/2+m)}
\int_{y=0}^\infty f(iy) y^{\ell-1+p/2-q/2}\,dy.
\end{equation}
In the special case $p=q=s$, this yields
\begin{equation}
\label{equation2.6}
I_{\matN,\Half,f} = i^{k-2} \pi 2^{-k-s+2}
\frac{\Gamma(k+s-1)}{\Gamma(s/2+\ell)\Gamma(s/2+m)}
\ipperiod{f}{(-z)^{\ell-1}}.
\end{equation}
\end{enumerate}
\end{proposition}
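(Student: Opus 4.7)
The plan is to handle the three parts in sequence. Part~(1) is a direct change of variables; Parts~(2) and~(3) are the main substance and use the same polar-coordinate technique.

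For Part~(1), I would substitute $w = \gamma z$ throughout the integral defining $I_{M\gamma,S,f}$. Writing $\gamma = \stwomatr{\alpha}{\beta}{\gamma_0}{\delta}$ and $t = \gamma_0 z + \delta$, the key observation is the matrix factorization $\gamma\stwovec{z}{1} = t\stwovec{w}{1}$, from which $(M\gamma)\stwovec{z}{1} = t \cdot M\stwovec{w}{1}$, so each linear form $az+b$ in the integrand gets multiplied by $t$, and similarly for $\bar z$ (with $\bar t$, combining via $|t|^2 = t\bar t$). Combined with $\Imag(w) = (\det\gamma)y/|t|^2$, the $GL(2,\R)^+$-invariance of $\dHz$, and the slash-action rule $(f|_k\gamma^{-1})(w) = f(z)\,t^k(\det\gamma)^{-k/2}$ (which uses the identity $t(c_{\gamma^{-1}}w+d_{\gamma^{-1}})=1$), all powers of $t$ telescope; collecting the remaining factors of $\det\gamma$ from the $y$-exponents yields the stated identity.

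For Part~(2) with $M = \matI$, the integrand reduces to $f(z)\bar z^{-\ell}|z|^{-p}y^{p/2+q/2+k-2}\,dx\,dy$. The plan is to pass to polar coordinates $z = re^{i\theta}$, $r > 0$, $\theta \in (0,\pi)$, so that the variables separate:
\[
I_{\matI,\Half,f} = \int_0^\pi e^{i\ell\theta}(\sin\theta)^{p/2+q/2+k-2}\int_0^\infty f(re^{i\theta})\,r^{s-1}\,dr\,d\theta,
\]
with $s = m - p/2 + q/2$ after using $k = \ell+m$. For the inner integral, I would rotate the ray of integration back to the positive imaginary axis using Cauchy's theorem applied to $f(u)u^{s-1}$ on the sector, obtaining $e^{-is\theta} i^s \int_0^\infty f(iy)y^{s-1}\,dy$. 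The remaining $\theta$-integral is then a classical beta-type evaluation,
\[
\int_0^\pi e^{i\alpha\theta}(\sin\theta)^{\beta-1}\,d\theta = \frac{\pi e^{i\pi\alpha/2}\Gamma(\beta)}{2^{\beta-1}\Gamma\left(\frac{\beta+\alpha+1}{2}\right)\Gamma\left(\frac{\beta-\alpha+1}{2}\right)},
\]
applied with $\alpha = \ell - s$ and $\beta = p/2+q/2+k-1$. A short calculation shows the two gamma-function arguments simplify to $p/2+\ell$ and $q/2+m$, and the phases collapse via $i^s e^{i\pi\alpha/2} = i^\ell$, yielding~(2.3). Absolute convergence is ensured by first assuming $\Real p, \Real q$ large and then analytically continuing.

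Part~(3) follows by the same method: for $M = \matN$ the integrand becomes $f(z)\bar z^{-m}|z|^{-q}y^{p/2+q/2+k-2}\,dx\,dy$, and the identical polar/contour-rotation computation, now with $s = \ell + p/2 - q/2$ and $\alpha = m - s$, produces~(2.5); the denominator $\Gamma(p/2+\ell)\Gamma(q/2+m)$ is unchanged because the pair $(\beta\pm\alpha+1)/2$ is symmetric under the swap $(\ell,p)\leftrightarrow(m,q)$. The special cases~(2.4) and~(2.6) then follow by setting $p = q = s$ and substituting $z = iy$ in the period pairing, which gives $\int_0^\infty f(iy)y^{n-1}\,dy = (-1)^{n-1}i^{-n}\ipperiod{f}{(-z)^{n-1}}$. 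The main technical point to check carefully is the contour rotation: the arcs at $|u|\to\infty$ and $u\to 0$ must contribute nothing, which follows respectively from the Fourier decay of $f$ at the cusp $i\infty$ and from the vanishing of $f$ at the cusp $0$, both applying throughout the wedge between the two rays.
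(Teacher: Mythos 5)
Your proposal is correct and is essentially the paper's own argument: both proofs decompose $\Half$ into rays through the origin (your polar angle $\theta$ corresponds to the paper's substitution $x=y\xi$ via $\xi=\cot\theta$), rotate the radial integral onto the positive imaginary axis by Cauchy's theorem using the cuspidality of $f$ at $0$ and $i\infty$, and reduce the angular part to a one-variable beta-type integral --- your $\int_0^\pi e^{i\alpha\theta}(\sin\theta)^{\beta-1}\,d\theta$ is the paper's $\int_{\R}(1+i\xi)^{-\alpha'}(1-i\xi)^{-\beta'}\,d\xi$ after the change of variable, which you evaluate by the classical formula where the paper uses Parseval for the Fourier pair $e^{-2\pi t}t^{b-1}$. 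The only point worth tightening is the convergence remark: before separating variables you need absolute convergence of the double integral in some region (the paper initially works with $\Real q\gg\Real p\gg 0$, not merely $\Real p,\Real q$ large, because the radial exponent $m-1-p/2+q/2$ matters, and then extends the range in its Remark~\ref{remark2.3}), after which analytic continuation finishes the job as you indicate.
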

\begin{proof}
Statement (1) is straightforward, and statements (2) and (3) are
equivalent.  We therefore prove statement (2).  Due to analytic
continuation, it suffices to prove identity~\eqref{equation2.3} under
the assumption that $\Real q \gg \Real p \gg 0$.  Let us first show
that the integral in $I_{\matI,\Half,f}$ converges absolutely under
this assumption; later on, in Remark~\ref{remark2.3}, we will in fact
show that we have absolute convergence under the weaker condition
$\Real p, \Real q \gg 0$.  This will imply that~\eqref{equation2.4} is
meaningful for $\Real s \gg 0$, and will complete the proof.

To show that $I_{\matI,\Half,f}$ converges absolutely for
$\Real q \gg \Real p \gg 0$, note that, uniformly in $x$,
$y^{k/2}\abs{f(z)}$ is bounded as $y \to 0$ and decreases
exponentially as $y \to \infty$.  Integrating the absolute value first
over $x$ (with the substitution $x = y\xi$) and then over $y$
shows that sufficient conditions for convergence are
$\Real p + \ell > 1$ and $\Real q - \Real p + k - 2\ell > 0$.  The
right hand side of~\eqref{equation2.3}, on the other hand, is
holomorphic for $2k + \Real p + \Real q - 2 > 0$, since the Mellin
transform of $f$ is entire, as usual, using exponential decay of
$f(iy)$ as $y \to 0$ through real values of $y$.

We can now evaluate the original integral for $I_{\matI,\Half,f}$.
Make again the substitution $x = y\xi$, so 
$z = iy(1-i\xi)$ and $\dHz = y^{-1} dy d\xi$.  This yields
\begin{equation}
\label{eqution2.7}
I_{\matI,\Half,f}
   = i^\ell \int_{\xi \in \R} (1+i\xi)^{-\ell} \abs{1+i\xi}^{-p}
     \int_{y=0}^\infty f(iy(1-i\xi)) y^{-p/2+q/2+m-1} \,dy \, d\xi.
\end{equation}
In the inner integral over $y$, we can substitute $u = y(1-i\xi)$ and
shift the contour of $u$ so that $u$ goes from $0$ to $+\infty$ along
real values.  (This uses the estimates on $y^{k/2}\abs{f(z)}$ mentioned
above, as well as the assumption $\Real q \gg \Real p$.  A related
alternative way is to expand $f(z) = \sum_{n\geq 1} c_n e^{inHz}$ for
some $H$ depending on the width of the cusp at $\infty$, bearing in
mind that the $c_n$ grow at worst like a power of $n$.)  Our desired
result~\eqref{equation2.3} then boils down to evaluating
\begin{equation}
\label{equation2.8}
\int_{\xi \in \R} (1+i\xi)^{-\alpha} (1-i\xi)^{-\beta} \,d\xi,
\quad\text{ where } \alpha = p/2 + \ell, \quad \beta = q/2 + m.
\end{equation}
This (standard) integral can be evaluated for $\Real p, \Real q \gg 0$
by considering the following function and its Fourier transform
$\hat{h}(u) = \int_{t \in \R} h(t) \exp(-2\pi i tu)\, dt$:
\begin{equation}
\label{equation2.9}
h_b(t) =
    \begin{cases}
         e^{-2\pi t}t^{b-1} & \text{if } t>0,\\
         0 & \text{otherwise,} \\
    \end{cases}
\qquad
\hat{h}_b(u) 
             = (2\pi(1+iu))^{-b} \Gamma(b),
\end{equation}
as well as the fact that the Fourier transform preserves the $L^2$
inner product:
\begin{equation}
\label{equation2.10}
\int_{t\in\R} h_\alpha(t) \overline{h_{\overline{\beta}}(t)} \, dt
= \int_{u\in\R} \hat{h}_\alpha(u)
\overline{\hat{h}_{\overline{\beta}}(u)} \, du.
\end{equation}
Putting all this together completes the proof, except for the comment
on convergence for $\Real p, \Real q \gg 0$, which we deal with in
the following remark.
\end{proof}
\begin{remark}
\label{remark2.3}
In this remark, we show by a more careful analysis that the integral
in $I_{\matI,\Half,f}$ converges absolutely when
$\Real p+\ell, \Real q+m > 2$.  This will follow, as we shall see,
from the assertion that the inner product
$\ipGamma{f(z)}
{E_{\ell,\lambda}(z,\overline{p}) E_{m,\mu}(z,\overline{q})}$
converges absolutely in this range of $(p,q)$, even after one replaces
the sums in the Eisenstein series $E_{\ell,\lambda}, E_{m,\mu}$ by
sums over the absolute values of their terms.  This good convergence
of the integral of the sums will justify all our subsequent
manipulations in Proposition~\ref{proposition2.5}.

Let us first explain why good behavior of
$\ipGamma{f}{E_{\ell,\lambda} E_{m,\mu}}$ implies the same for 
$I_{\matI,\Half,f}$.  Let $\Funddomain \subset \Half$ be a fundamental
domain for $\Gamma = \Gamma(N)$.  (The following implicitly assumes
that $N \geq 3$, so that $\stwomatr{-1}{0}{0}{-1} \notin \Gamma$;
otherwise, a minor change fixes the proof.)  We can decompose the
integral for $I$ into
$I_{\matI,\Half,f} =
 \sum_{\gamma \in \Gamma(N)} I_{\matI, \gamma\Funddomain, f}
= \sum_\gamma I_{\gamma,\Funddomain,f}$,
where we used~\eqref{equation2.2} in the last step, as well as the
fact that $f$ transforms under $\Gamma(N)$.  Putting absolute
values into all the sums gives the following expression as a bound for
$I_{\matI,\Half,f}$:
\begin{equation}
\label{equation2.11}
\int_{z \in \Funddomain}
   \abs{f(z)} \sum_{\gamma = \stwomatr{a}{b}{c}{d}}
          \abs{az+b}^{-\ell-\Real p} \abs{cz+d}^{-m-\Real q}
        y^{\Real p/2+\Real q/2+k} \, \dHz.
\end{equation}
In this bound, the choice of $\gamma = \stwomatr{a}{b}{c}{d} \in
\Gamma(N)$ in the sum runs over a proper subset of all
$\{(a,b,c,d) \in \Z^4 \mid (a,b) \neq (0,0) \neq (c,d)\}$,
so we see that we can (wastefully) bound this by including all the
other $(a,b,c,d)$ terms in the sum.  This yields (an absolute value
version of) the integral of $f$
against the product $E_{\ell,0} E_{m,0}$.  For the application to
Proposition~\ref{proposition2.5}, we will show convergence more
generally for the integral of $f$ against $E_{\ell,\lambda} E_{m,\mu}$.

We thus want to study convergence of the
expression~\eqref{equation2.11} when $\stwomatr{a}{b}{c}{d}$ ranges
over the larger set $X_{\lambda,\mu}$ of elements where
$(a,b) \equiv \lambda \bmod \Z^2$, $(c,d) \equiv \mu \bmod \Z^2$, and
$(a,b) \neq (0,0) \neq (c,d)$ (see~\eqref{equation2.12} below).
We note that the fundamental domain $\Funddomain$ is contained in a
finite union of translates (under $GL(2,\Q)^+$) of Siegel domains of
the form $S_C =  \{z \mid \abs{x} \leq C, \> y \geq C^{-1}\}$ for
some $C>0$.  Using~\eqref{equation2.2}, we see that it is sufficient
to show good convergence of~\eqref{equation2.11}, with $\Funddomain$
replaced by $S_C$, where for each translate we replace both the cusp
form $f$ and the Eisenstein series by translates, so $f,\lambda,\mu$
may be slightly different for each of these finitely many integrals
over $S_C$.  However, it is standard that for $z \in S_C$, there
exists a constant $K$ (depending only on $C$, and independent of
$z$ or $(a,b)$) for which $\abs{az+b} \geq K \abs{ai+b}$; a similar
remark holds for $(c,d)$.
Hence the above integral can be compared to the product of 
$\sum_{\stwomatr{a}{b}{c}{d} \in X_{\lambda,\mu}}
      \abs{ai+b}^{-\ell-\Real p} \abs{ci+d}^{-m-\Real q}$
with the integral
$\int_{z \in S_C} \abs{f(z)} y^{\Real p/2+\Real q/2+k} \, \dHz$.
The sum converges as usual as soon as $\Real p+\ell, \Real q+m > 2$,
and the integral converges because $f$ is a cusp form.
\end{remark}

\begin{definition}
\label{definition2.4}
Let $\lambda = (\lambda_1,\lambda_2),\mu = (\mu_1,\mu_2) \in
N^{-1}\Z^2$.  We define the following sets of matrices:
\begin{equation}
\label{equation2.12}
\begin{split}
X_{\lambda,\mu} &= 
  \{M = \twomatr{a}{b}{c}{d} \in M_2(\Q) \mid \\
& \qquad
    (a,b) \equiv \lambda \bmod \Z^2,
  \>(c,d) \equiv \mu \bmod \Z^2,
  \>(a,b) \neq (0,0) \neq (c,d)
 \}\\
X^+_{\lambda,\mu} &= \{M \in X_{\lambda,\mu} \mid \det M > 0\} \\
X^-_{\lambda,\mu} &= \{M \in X_{\lambda,\mu} \mid \det M < 0\} \\
X^0_{\lambda,\mu} &= \{M \in X_{\lambda,\mu} \mid \det M = 0\} \\
\end{split}
\end{equation}
We will occasionally write $X^\bullet_{\lambda,\mu}$ to refer to any
of the above, where $\bullet$ can be the empty string or one of
$+,-,0$.  The group $\Gamma=\Gamma(N)$ acts on any of these
sets by right multiplication: if
$M \in X^\bullet_{\lambda,\mu}$ and $\gamma \in \Gamma$, then
$M\gamma \in X^\bullet_{\lambda,\mu}$.  We also denote
\begin{equation}
\label{equation2.13}
 Y^\bullet_{\lambda,\mu} = X^\bullet_{\lambda,\mu}/\Gamma
      = \text{ any set of representatives for the
              $\Gamma$-orbits on $X^\bullet_{\lambda,\mu}$}.
\end{equation}
Note that the sets $X^0_{\lambda,\mu}$ and $Y^0_{\lambda,\mu}$ may be
empty.  Roughly speaking, nonemptiness requires $\lambda$ and $\mu$ to
be parallel vectors modulo $\Z^2$.
\end{definition}

\begin{proposition}
\label{proposition2.5}
Let $\ell,m,k,p,q,f$ be as above, and write $\ell = 1+\ell', m = 1+m'$
so that $\ell'+m' = w = k-2$. 
Assume that the level $N$ satisfies $N \geq 3$ (this is minor;
otherwise, one needs an extra factor of $2$ arising from the presence
of $-\matI \in \Gamma$).
Then the following identity holds for $\Real p + \ell, \Real q + m >
2$, and each integral $I_{M',\Half,f'}$ in the sum below converges
absolutely, as does the sum itself:
\begin{equation}
\label{equation2.14}
\begin{split}
&\ipGamma{f(z)}
{E_{\ell,\lambda}(z,\overline{p}) E_{m,\mu}(z,\overline{q})} \\
&\> =
\sum_{M \in Y^+_{\lambda,\mu}}
     (\det M)^{-(p+q+k)/2} I_{\matI,\Half,f|_k M^{-1}}
+ \sum_{M \in Y^+_{\mu,\lambda}}
     (\det M)^{-(p+q+k)/2} I_{\matN,\Half,f|_k M^{-1}}.\\
\end{split}
\end{equation}
In case $p = q = s$ with $\Real s+\min(\ell,m) > 2$, the value of this
expression is
\begin{equation}
\label{equation2.15}
\begin{split}
&\ipGamma{f(z)}
{E_{\ell,\lambda}(z,\overline{s}) E_{m,\mu}(z,\overline{s})}
= i^w \pi 2^{-w-s}G_{w,\ell',m'}(s) \times\\
 & \times \Bigl[
  \sum_{M \in Y^+_{\lambda,\mu}}
     (\det M)^{-s-k/2} \ipperiod{f|_k M^{-1}}{(-z)^{m'}}
+ \sum_{M \in Y^+_{\mu,\lambda}}
     (\det M)^{-s-k/2} \ipperiod{f|_k M^{-1}}{(-z)^{\ell'}}
 \Bigr].\\
\end{split}
\end{equation}
where
\begin{equation}
\label{equation2.15.5}
G_{w,\ell',m'}(s) = \frac{\Gamma(s+w+1)}{\Gamma(s/2+\ell'+1)\Gamma(s/2+m'+1)}.
\end{equation}
Note for future use that
$G_{w,\ell',m'}(0) = \binom{w}{\ell'} = \binom{w}{m'}$.
\end{proposition}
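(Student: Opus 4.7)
My plan is to unfold the Petersson inner product into a sum of integrals $I_{M,\Half,f}$ indexed by $\Gamma$-orbit representatives of $X_{\lambda,\mu}$, then to use Proposition~\ref{proposition2.2} to rewrite each term. After expanding $\overline{E_{\ell,\lambda}(z,\overline p)\,E_{m,\mu}(z,\overline q)}$ as a double sum over $(a,b)\equiv\lambda$ and $(c,d)\equiv\mu$, I package the pair as $M=\stwomatr{a}{b}{c}{d}\in X_{\lambda,\mu}$. Remark~\ref{remark2.3} gives absolute convergence for $\Real p+\ell,\Real q+m>2$, so every rearrangement below is legitimate. Partitioning $X_{\lambda,\mu}$ into $\Gamma$-orbits (all with trivial stabilizer, since $-\matI\notin\Gamma(N)$ when $N\ge3$), for each $\gamma\in\Gamma$ formula~\eqref{equation2.2} with $\det\gamma=1$ and $f|_k\gamma^{-1}=f$ gives $I_{M\gamma,\Funddomain,f}=I_{M,\gamma\Funddomain,f}$; summing over $\gamma$ glues $\bigsqcup_\gamma \gamma\Funddomain$ into $\Half$, yielding $\ipGamma{f}{E_{\ell,\lambda}(\cdot,\overline p)E_{m,\mu}(\cdot,\overline q)}=\sum_{M\in Y_{\lambda,\mu}}I_{M,\Half,f}$.

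I then split $Y_{\lambda,\mu}=Y^+_{\lambda,\mu}\sqcup Y^-_{\lambda,\mu}\sqcup Y^0_{\lambda,\mu}$ by the sign of $\det M$. For $M\in Y^+_{\lambda,\mu}$, viewing $M=\matI\cdot M$ as an element of $GL(2,\R)^+$ and applying~\eqref{equation2.2} produces $I_{M,\Half,f}=(\det M)^{-(p+q+k)/2}I_{\matI,\Half,f|_k M^{-1}}$, the first sum in~\eqref{equation2.14}. For $M\in Y^-_{\lambda,\mu}$, left multiplication $M\mapsto\matN M$ is a $\Gamma$-equivariant bijection $Y^-_{\lambda,\mu}\to Y^+_{\mu,\lambda}$ that swaps the two rows and flips the sign of the determinant; writing the original matrix as $\matN\cdot M'$ with $M'\in Y^+_{\mu,\lambda}$ and applying~\eqref{equation2.2} with $M_0=\matN$, $\gamma=M'$ produces the second sum. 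The main obstacle is to establish that $\sum_{M\in Y^0_{\lambda,\mu}}I_{M,\Half,f}=0$, so that the rank-one orbits do not pollute the right-hand side of~\eqref{equation2.14}: a matrix $M\in X^0$ has proportional rows, and after pushing a suitable $\gamma_0\in SL(2,\Z)$ to the right via~\eqref{equation2.2} the integral reduces to a constant multiple of $\int_\Half (f|_k\gamma_0^{-1})(z)\,\bar z^{-k}|z|^{-p-q}y^{(p+q)/2+k-2}\,dx\,dy$. In polar coordinates this becomes a Mellin transform of the cusp form $f|_k\gamma_0^{-1}$, and one must check that the finite sum of such integrals across all $Y^0_{\lambda,\mu}$ orbits either cancels (or that $Y^0_{\lambda,\mu}$ is empty, as happens whenever $\lambda,\mu$ are non-parallel modulo $\Z^2$); I expect this degenerate case to require a bespoke contour-shifting argument exploiting the cuspidality of $f|_k\gamma_0^{-1}$ at both $\infty$ and $0$.

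Finally, to derive~\eqref{equation2.15} from~\eqref{equation2.14}, I specialize $p=q=s$ and invoke~\eqref{equation2.4} applied with $f|_k M^{-1}$ in place of $f$ for each summand of the first sum, and~\eqref{equation2.6} similarly for the second sum. Under $k=w+2$, $\ell=\ell'+1$, $m=m'+1$, the common prefactor $i^{k-2}\pi\,2^{-k-s+2}\Gamma(k+s-1)/[\Gamma(s/2+\ell)\Gamma(s/2+m)]$ collapses to $i^w\pi\,2^{-w-s}G_{w,\ell',m'}(s)$ via the definition~\eqref{equation2.15.5}, the exponent $-(p+q+k)/2$ becomes $-s-k/2$, and the period pairings turn into $\ipperiod{f|_k M^{-1}}{(-z)^{m'}}$ in the first sum and $\ipperiod{f|_k M^{-1}}{(-z)^{\ell'}}$ in the second, which is exactly~\eqref{equation2.15}. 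The closing remark $G_{w,\ell',m'}(0)=\binom{w}{\ell'}=\binom{w}{m'}$ is then immediate from $\Gamma(n+1)=n!$ for $n\in\{w,\ell',m'\}$ together with $\ell'+m'=w$.
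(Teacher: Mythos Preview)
Your treatment of the nondegenerate orbits $Y^+_{\lambda,\mu}$ and $Y^-_{\lambda,\mu}$, and your deduction of~\eqref{equation2.15} from~\eqref{equation2.14}, are both correct and agree with the paper. The gap is in your handling of the degenerate orbits $Y^0_{\lambda,\mu}$.

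The assertion that every $\Gamma$-orbit on $X_{\lambda,\mu}$ has trivial stabilizer is false. The condition $-\matI\notin\Gamma(N)$ rules out nontrivial stabilizers only for \emph{invertible} $M$. A rank-one matrix $M\in X^0_{\lambda,\mu}$ has an infinite unipotent stabilizer: if the rows of $M$ lie in the line spanned by a primitive vector $(c',d')$, then any $\gamma\in\Gamma$ fixing $(c',d')$ satisfies $M\gamma=M$. After conjugating by $\gamma_0\in SL(2,\Z)$ so that $M\gamma_0^{-1}=M_\infty=\stwomatr{0}{\delta_1}{0}{\delta_2}$, one sees $\Gamma_{M_\infty}=\bigl\{\stwomatr{1}{Nt}{0}{1}:t\in\Z\bigr\}$. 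Consequently the correct unfolding of the $X^0$-part is
\[
T^0=\sum_{M\in Y^0_{\lambda,\mu}} I_{M,\Funddomain_M,f},
\qquad \Funddomain_M=\Gamma_M\backslash\Half,
\]
not $\sum_{M\in Y^0} I_{M,\Half,f}$. Your integral $I_{M,\Half,f}$ for rank-one $M$ in fact diverges: in the form $M_\infty$ the integrand is a constant times $f(z)\,y^{(p+q)/2+k-2}$, and the $x$-integral over all of $\R$ of a nonzero periodic function diverges. The transformation~\eqref{equation2.2} cannot change this, so your polar-coordinate integral diverges as well (the failure is in the lack of uniformity near $\theta=0,\pi$).

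The fix, which is what the paper does, is much simpler than a contour shift. With the correct domain $\Funddomain_{M_\infty}=\{0\le x<N,\ y>0\}$, the integral is a constant times
\[
\int_0^\infty y^{(p+q)/2+k-2}\Bigl(\int_0^N (f|_k\gamma_0^{-1})(x+iy)\,dx\Bigr)\,dy = 0,
\]
since the inner integral is the zeroth Fourier coefficient of a cusp form. This is the ``negligible orbit'' mechanism from Rankin--Selberg: the parabolic stabilizer is precisely what forces the integration over a single period in $x$, which cuspidality then kills.
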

\begin{proof}
The assertions regarding convergence follow from redoing the proof
that we are about to present with absolute values everywhere, and
invoking the arguments in Remark~\ref{remark2.3}.  We leave the
details to the reader, and proceed with the actual computation.  This
uses a standard unfolding argument to evaluate the inner product.  Let
$\Funddomain$ be a fundamental domain for $\Gamma$.  From the disjoint
union 
$X_{\lambda,\mu}
 = X^+_{\lambda,\mu} \union X^-_{\lambda,\mu} \union X^0_{\lambda,\mu}$,
we obtain
\begin{equation}
\label{equation2.16}
\ipGamma{f(z)}
{E_{\ell,\lambda}(z,\overline{p}) E_{m,\mu}(z,\overline{q})}
= T^+ + T^- + T^0,
\qquad
T^\bullet = \sum_{M \in X^\bullet_{\lambda,\mu}} I_{M,\Funddomain,f}.
\end{equation}
For any $M$ in the above sum, write $\Gamma_M = \{\gamma \in \Gamma
\mid M\gamma = M\}$ for the stabilizer of $M$.  If $M \in X^+ \union
X^-$, then $\Gamma_M = \{\matI\}$, but elements of $X^0$ have nontrivial
stabilizers that are parabolic subgroups.  Write $\Funddomain_M$ for a
fundamental domain of $\Gamma_M \backslash \Half$.  Then, using part
(1) of Proposition~\ref{proposition2.2}, the invariance of $f$ under
$\Gamma$, and the fact that $I_{M,S,f}$ is countably additive in $S$,
we obtain as usual
\begin{equation}
\label{equation2.17}
\begin{split}
T^\bullet &= \sum_{M \in Y^\bullet_{\lambda,\mu}}
I_{M,\Funddomain_M,f} 
\qquad \text{ (for $\bullet \in \{+,-,0\}$)}\\
&= \sum_{M \in Y^\bullet_{\lambda,\mu}} I_{M,\Half,f}
\qquad \text{ (only for $\bullet \in \{+,-\}$)}.
\end{split}
\end{equation}
From the above and another use of Proposition~\ref{proposition2.2}(1),
we obtain that $T^+$ is equal to the first term on the right hand side
of~\eqref{equation2.14}.  With a little more work, we obtain that
$T^-$ is equal to the second term on the right hand side
of~\eqref{equation2.14}; the point is that we have a bijection
$M \in X^+_{\mu,\lambda} \mapsto \matN M \in X^-_{\lambda,\mu}$ that
descends to a similar bijection from $Y^+_{\mu,\lambda}$ to
$Y^-_{\lambda,\mu}$.

It remains to show that $T^0 = 0$.  In principle, this can be shown
directly by showing that the sum of the  terms coming from singular
$M$ in the product $E_{\ell,\lambda}(z,\overline{p})
E_{m,\mu}(z,\overline{q})$ is an Eisenstein series; indeed,
$\det M = 0$ occurs precisely when $(c,d) = (\kappa a, \kappa b)$ for
some $\kappa \neq 0$, and in that case we essentially obtain a sum
over $(a,b)$ of $(a\overline{z}+b)^{-\ell-m} \abs{az+b}^{-p-q}$,
including some annoyance from factors involving $\kappa$.  We
prefer to argue instead using the stabilizer $\Gamma_M$, in the style
of ``negligible orbits'' appearing in the Rankin-Selberg method on higher
rank groups.

For $M \in Y^0_{\lambda,\mu}$, we have $\det M = 0$; hovever, both rows
of $M$ are nonzero, hence their span is a one-dimensional $\Q$-rational
subspace of $\Q^2$.  This subspace contains a primitive integral vector
$(c',d') \in \Z^2$ with $\gcd(c',d') = 1$.  Choose $a',b'\in \Z$ so
that $\gamma := \stwomatr{a'}{b'}{c'}{d'} \in SL(2,\Z)$; then
\begin{equation}
\label{equation2.18}
M = M_\infty \gamma,
  \quad\text{ where }
M_\infty = \twomatr{0}{\delta_1}{0}{\delta_2},
   \quad\text{ for some }
\delta_1,\delta_2 \in N^{-1}\Z - \{0\}.
\end{equation}
(In fact, the $i$th row of $M$ is $(\delta_i c', \delta_i d')$, and 
$\delta_i$ is essentially the $\gcd$ of the entries of that row.)
Using the fact that $\gamma$ normalizes $\Gamma$, we obtain that
\begin{equation}
\label{equation2.19}
\begin{split}
\gamma \Gamma_M \gamma^{-1} &= \Gamma_{M_\infty} = \Bigl\{
\twomatr{1}{Nt}{0}{1} \>\Bigm|\> t \in \Z \Bigr\},
\\
\gamma \Funddomain_M &= \Funddomain_{M_\infty}
                       = \{z \mid 0 \leq x < N, \> y > 0\}.
\\
\end{split}
\end{equation}
It is then a simple matter to conclude from the cuspidality of $f$ that
\begin{equation}
\label{equation2.20}
I_{M,\Funddomain_M,f}
  = I_{M_\infty,\Funddomain_{M_\infty}, f|_k \gamma^{-1}}
  = 0.
\end{equation}
This concludes the proof of~\eqref{equation2.14}.  Invoking parts (2)
and~(3) of Proposition~\ref{proposition2.2} now
yields~\eqref{equation2.15}.
\end{proof}

We now use the Manin relations between the period symbols
in~\eqref{equation2.15} to deduce relations between the Eisenstein
series.  The relations involving $\sigma$ follow easily from the facts
that $E_{\ell,-\lambda} = (-1)^\ell E_{\ell,\lambda}$ and that powers
of $z$ transform nicely under $\sigma$.  We state them for the record;
they are trivial to see directly (the reader is encouraged to make the
connection with the reasoning in Theorem~\ref{theorem2.8}
or~\eqref{equation2.23.5} below):
\begin{equation}
\label{equation2.20.5}
E_{\ell'+1,\lambda}(z,\overline{s}) E_{m'+1,\mu}(z,\overline{s})
+
(-1)^{\ell'}
E_{m'+1,\mu}(z,\overline{s}) E_{\ell'+1,-\lambda}(z,\overline{s})
= 0.  
\end{equation}

The relations involving $\tau$ are the interesting
ones.  However, if we immediately apply $\tau$ to the powers of $z$
and insist on expanding all polynomials we encounter into linear
combinations of powers of $z$, the computation becomes messy.  A
better way is to use $w$th powers of linear polynomials; linear
combinations of these give all polynomials of degree $\leq w$.  The
following elementary observation gives the (nice) behavior of such
$w$th powers under $\tau$, and as a side benefit highlights the
fact that $\tau$ has order $3$.

\begin{lemma}
\label{lemma2.6}
Let $a,b,c \in \C$ satisfy $a+b+c=0$.  Then
\begin{equation}
\label{equation2.21}
(-az+b)^w |_{-w} \tau = (-bz+c)^w,
\qquad
(-az+b)^w |_{-w} \tau^2 = (-cz+a)^w.
\end{equation}
\end{lemma}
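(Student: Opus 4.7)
The plan is to verify both identities by direct calculation from the definition of the slash operator in~\eqref{equation1.3}, and then to obtain the $\tau^2$ identity almost for free from the $\tau$ identity using the cyclic symmetry of the hypothesis $a+b+c=0$.

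First I would record that $\det\tau = 0\cdot(-1) - 1\cdot(-1) = 1$ and $\tau z = 1/(-z-1)$, so that~\eqref{equation1.3} specializes to
\[
(f|_{-w}\tau)(z) = f\!\left(\tfrac{1}{-z-1}\right)(-z-1)^{w}
\]
for any function $f$ on $\Half$. Applied to $f(z) = (-az+b)^{w}$, the factor $(-z-1)^{w}$ can be pulled inside the $w$th power to clear the denominator, giving
\[
(-az+b)^{w}|_{-w}\tau
= \bigl(-a + b(-z-1)\bigr)^{w}
= \bigl(-bz - (a+b)\bigr)^{w}.
\]
Substituting $a+b = -c$ from the hypothesis collapses this to $(-bz+c)^{w}$, which is the first identity.

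For the second identity, I would avoid computing $\tau^{2}$ explicitly and instead observe that the hypothesis $a+b+c=0$ is invariant under the cyclic relabeling $(a,b,c)\mapsto(b,c,a)$. Applying the first identity to this new triple yields $(-bz+c)^{w}|_{-w}\tau = (-cz+a)^{w}$, and then composing slash operators gives $(-az+b)^{w}|_{-w}\tau^{2} = (-cz+a)^{w}$. As a sanity check, one more application of $\tau$ (via the triple $(c,a,b)$) returns $(-az+b)^{w}$, consistent with $\tau$ having order $3$ in $PSL(2,\Z)$.

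There is no real obstacle here: the whole argument is a one-line computation plus a symmetry observation. The only place to be careful is bookkeeping the signs in $\tau z = 1/(-z-1)$ and the factor $(-z-1)^{w}$, and verifying that no factor of $(\det\tau)^{-w/2}$ intervenes (it does not, since $\det\tau = 1$).
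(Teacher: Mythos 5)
Your proof is correct: the determinant and cocycle factor are handled properly, the substitution $a+b=-c$ gives the first identity, and deducing the $\tau^2$ case by cyclically permuting $(a,b,c)$ and composing slash operators is exactly the intended mechanism (the paper states the lemma without proof, as an elementary observation). Nothing to add.
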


The binomial coefficients in the expansion of $(-az+b)^w$ also echo
the values of $G_{w,\ell',m'}(0)$, especially since we
eventually want to evaluate the Eisenstein series at $s=0$.
We therefore introduce the following notation for various linear
combinations of Eisenstein series.
\begin{definition}
\label{definition2.7}
Fix the value of $w=k-2 \geq 0$.
For $a,b \in \C$, $\lambda,\mu \in N^{-1}\Z^2$, and
$s\in \C$, define the antiholomorphic function of $a,b,s$ (and
``weight $k$'' function of $z\in\Half$) using the functions
$G_{w,\ell',m'}$ of~\eqref{equation2.15.5}:
\begin{equation}
\label{equation2.22}
L_{\lambda,\mu,a,b}(z,s) = \sum_{\substack{\ell'+m'=w\\\ell',m' \geq 0}}
\binom{w}{\ell'} \bigl(G_{w,\ell',m'}(\overline{s})\bigr)^{-1}
\overline{a}^{\ell'} \overline{b}^{m'}
E_{\ell'+1,\lambda}(z,\overline{s}) E_{m'+1,\mu}(z,\overline{s}).
\end{equation}
The above function is anti-meromorphic in $s$, and in fact
is anti-holomorphic for all $s\in \C$ since
$\Gamma(s/2+\ell)E_{\ell,\lambda}(z,s)$ is entire.
In particular, we have 
\begin{equation}
\label{equation2.23}
L_{\lambda,\mu,a,b}(z,0) = \sum_{\substack{\ell+m=k\\\ell,m \geq 1}}
\overline{a}^{\ell-1} \overline{b}^{m-1}
E_{\ell,\lambda}(z) E_{m,\mu}(z).
\end{equation}
\end{definition}
We note here an easy symmetry in $L$ with respect to either
exchanging $\lambda$ and~$\mu$ or changing their sign(s):
\begin{equation}
\label{equation2.23.5}
L_{\lambda,\mu,a,b}(z,s) =
L_{\mu,\lambda,b,a}(z,s) =
-L_{-\lambda,\mu,-a,b}(z,s).
\end{equation}

We are ready to state our main result.
\begin{theorem}
\label{theorem2.8}
Fix $k$ and $w$, as before.  Let $\lambda,\mu,\nu \in N^{-1}\Z^2$
satisfy $\lambda+\mu+\nu \equiv (0,0) \bmod \Z^2$, and let
$a,b,c\in\C$ satisfy $a+b+c=0$.  Then for all cusp forms $f \in
\cuspforms_k(\Gamma)$, and for all $s$, we have
\begin{equation}
\label{equation2.24}
\ipGamma{f(z)}{L_{\lambda,\mu,a,b}(z,s)
             + L_{\mu,\nu,b,c}(z,s)
             +L_{\nu,\lambda,c,a}(z,s)}
=0.
\end{equation}
In particular, taking $s=0$, we obtain that the following expression
is orthogonal to all cusp forms $f \in \cuspforms_k(\Gamma)$:
\begin{equation}
\label{equation2.24.5}
\begin{split}
 &\sum_{\substack{\ell+m=k\\ \ell,m \geq 1}}
\overline{a}^{\ell-1} \overline{b}^{m-1}
E_{\ell,\lambda}(z) E_{m,\mu}(z)
\\ +&
 \sum_{\substack{\ell+m=k\\ \ell,m \geq 1}}
\overline{b}^{\ell-1} \overline{c}^{m-1}
E_{\ell,\mu}(z) E_{m,\nu}(z)
\\ +&
 \sum_{\substack{\ell+m=k\\ \ell,m \geq 1}}
\overline{c}^{\ell-1} \overline{a}^{m-1}
E_{\ell,\nu}(z) E_{m,\lambda}(z).
\\
\end{split}
\end{equation}
\end{theorem}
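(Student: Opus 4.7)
The plan is to apply Proposition~\ref{proposition2.5} term-by-term to $\ipGamma{f}{L_{\lambda,\mu,a,b}(z,s)}$ and then use Manin's three-term relation~\eqref{equation1.7} to kill the resulting period sums. First, fix $\Real s$ large enough that~\eqref{equation2.15} applies for each $\ell' + m' = w$. Unfolding the sesquilinear pairing sends the coefficient $\binom{w}{\ell'}(G_{w,\ell',m'}(\bar{s}))^{-1}\bar{a}^{\ell'}\bar{b}^{m'}$ to $\binom{w}{\ell'}(G_{w,\ell',m'}(s))^{-1} a^{\ell'} b^{m'}$ (using $\overline{G_{w,\ell',m'}(\bar s)} = G_{w,\ell',m'}(s)$), and the factor $G_{w,\ell',m'}(s)$ from~\eqref{equation2.15} cancels. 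Pulling the sum over $\ell'$ inside the period pairings collapses two binomial sums:
\begin{equation*}
\sum_{\ell'+m'=w} \binom{w}{\ell'} a^{\ell'} b^{m'} (-z)^{m'} = (-bz+a)^w, \qquad
\sum_{\ell'+m'=w} \binom{w}{\ell'} a^{\ell'} b^{m'} (-z)^{\ell'} = (-az+b)^w.
\end{equation*}
Writing $C(s) = i^w\pi 2^{-w-s}$, this yields
\begin{equation*}
\begin{split}
\ipGamma{f}{L_{\lambda,\mu,a,b}(z,s)}
&= C(s)\sum_{M \in Y^+_{\lambda,\mu}} (\det M)^{-s-k/2} \ipperiod{f|_k M^{-1}}{(-bz+a)^w}\\
&\quad + C(s)\sum_{M \in Y^+_{\mu,\lambda}} (\det M)^{-s-k/2} \ipperiod{f|_k M^{-1}}{(-az+b)^w}.
\end{split}
\end{equation*}

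Summing cyclically over $(\lambda,\mu,a,b)\to(\mu,\nu,b,c)\to(\nu,\lambda,c,a)$ produces six sums, which split into two triples. The ``first-type'' triple runs over $Y^+_{\lambda,\mu}, Y^+_{\mu,\nu}, Y^+_{\nu,\lambda}$ with polynomials $(-bz+a)^w, (-cz+b)^w, (-az+c)^w$; the ``second-type'' triple runs over $Y^+_{\mu,\lambda}, Y^+_{\nu,\mu}, Y^+_{\lambda,\nu}$ with $(-az+b)^w, (-bz+c)^w, (-cz+a)^w$. Using $\lambda+\mu+\nu\equiv(0,0)$, a direct row-by-row computation shows that left multiplication by $\tau$ induces $\Gamma$-equivariant, determinant-preserving bijections $X^+_{\lambda,\mu}\to X^+_{\mu,\nu}\to X^+_{\nu,\lambda}\to X^+_{\lambda,\mu}$, and analogously for the second cycle, which descend to bijections of orbit sets $Y^+$.

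To finish, apply~\eqref{equation1.7} with $g = f|_k M^{-1}$ and $P = (-bz+a)^w$ for each $M\in Y^+_{\lambda,\mu}$. Lemma~\ref{lemma2.6}, applied with $(b,a,c)$ in place of $(a,b,c)$, gives $P|_{-w}\tau = (-az+c)^w$ and $P|_{-w}\tau^2 = (-cz+b)^w$; meanwhile $(f|_k M^{-1})|_k \tau^j = f|_k (\tau^{-j}M)^{-1}$ with $\tau^{-1}M\in Y^+_{\nu,\lambda}$ and $\tau^{-2}M\in Y^+_{\mu,\nu}$ via the bijections above. Thus, summed over $M$ weighted by $(\det M)^{-s-k/2}$, Manin's three-term identity is precisely the statement that the first-type triple sums to zero; the same argument with $P = (-az+b)^w$ kills the second-type triple. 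This proves~\eqref{equation2.24} for $\Real s$ large. Both sides are antiholomorphic in $s$ on all of $\C$ (using entirety of $\Gamma(s/2+\ell)E_{\ell,\lambda}(z,s)$ and the $(G)^{-1}$ normalization in $L$), so analytic continuation gives the identity for all $s$, and specialization at $s=0$ combined with $G_{w,\ell',m'}(0)=\binom{w}{\ell'}$ and~\eqref{equation2.23} yields~\eqref{equation2.24.5}. The main obstacle is the bookkeeping in the middle step: one must verify that the $\tau$-bijections on matrix orbits are exactly in sync with the $\tau$-action on polynomials from Lemma~\ref{lemma2.6}, so that the index set and integrand align in each Manin triple; once that is checked, convergence and the analytic extension in $s$ are routine from Remark~\ref{remark2.3} and Proposition~\ref{proposition2.5}.
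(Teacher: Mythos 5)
Your proof is correct and follows essentially the same route as the paper: expand $\ipGamma{f}{L_{\lambda,\mu,a,b}(z,s)}$ via Proposition~\ref{proposition2.5}, collapse the binomial sums to $(-bz+a)^w$ and $(-az+b)^w$, and then group the six period sums into two Manin triples using the determinant-preserving bijections $Y^+_{\alpha,\beta}\to Y^+_{\gamma,\alpha}$ induced by left multiplication by $\tau^{-1}$, exactly as in the paper's equations~\eqref{equation2.25}--\eqref{equation2.28}. The only cosmetic quibble is that $\ipGamma{f}{L(z,s)}$ is holomorphic (not antiholomorphic) in $s$ since the Petersson pairing conjugates its second argument, but this does not affect the analytic continuation step.
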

\begin{proof}
It is enough for us to prove~\eqref{equation2.24} when $\Real s > 2$.
By taking linear combinations of~\eqref{equation2.15}, we obtain the
following identity
\begin{equation}
\label{equation2.25}
\begin{split}
\ipGamma{f(z)}{L_{\lambda,\mu,a,b}(z,s)}
&= i^w \pi 2^{-w-s} \times\\
 & \times \Bigl[
  \sum_{M \in Y^+_{\lambda,\mu}}
     (\det M)^{-s-k/2} \ipperiod{f|_k M^{-1}}{(-bz+a)^w}\\
&\qquad
+ \sum_{M \in Y^+_{\mu,\lambda}}
     (\det M)^{-s-k/2} \ipperiod{f|_k M^{-1}}{(-az+b)^w}
 \Bigr]\\
= i^w \pi 2^{-w-s}&\Bigl[
      S(\lambda,\mu,(-bz+a)^w,\matI) + S(\mu,\lambda,(-az+b)^w,\matI)
\Bigr].\\
\end{split}
\end{equation}
Here we temporarily introduce the notation
\begin{equation}
\label{equation2.26}
S(\alpha,\beta,P(z),g)
 = \sum_{M\in Y^+_{\alpha,\beta}}
     (\det M)^{-s-k/2} \ipperiod{f|_k M^{-1}g}{P(z)}.
\end{equation}
A similar identity to~\eqref{equation2.27} holds for the inner
products of $f$ with $L_{\mu,\nu,b,c}$ and $L_{\nu,\lambda,c,a}$.
All these identities share the common factor $i^w \pi 2^{-w-s}$, which
we can ignore since our goal is to show that the sum is zero.

The Manin relations~\eqref{equation1.7}, combined
with~\eqref{equation2.21}, imply that
\begin{equation}
\label{equation2.27}
\begin{split}
 &S(\lambda,\mu,(-bz+a)^w,\matI) + S(\mu,\lambda,(-az+b)^w,\matI)\\
+&S(\lambda,\mu,(-az+c)^w,\tau) + S(\mu,\lambda,(-bz+c)^w,\tau)\\
+&S(\lambda,\mu,(-cz+b)^w,\tau^2) + S(\mu,\lambda,(-cz+a)^w,\tau^2)\\
&=0.
\end{split}
\end{equation}
We now claim that
\begin{equation}
\label{equation2.28}
\begin{split}
S(\lambda,\mu,(-az+c)^w,\tau)
&=
S(\nu,\lambda,(-az+c)^w,\matI),
\\
S(\mu,\lambda,(-bz+c)^w,\tau)
&=
S(\nu,\mu,(-bz+c)^w,\matI),
\\
S(\lambda,\mu,(-cz+b)^w,\tau^2)
&=
S(\mu,\nu,(-cz+b)^w,\matI),
\\
S(\mu,\lambda,(-cz+a)^w,\tau^2)
&=
S(\lambda,\nu,(-cz+a)^w,\matI).
\\
\end{split}
\end{equation}
To prove this claim, we observe that the first two identities
in~\eqref{equation2.28} there follow easily from the fact that  
$M \mapsto \tau^{-1}M$ is a determinant-preserving bijection
from $Y^+_{\alpha,\beta}$ to $Y^+_{\gamma,\alpha}$
whenever $\{\alpha,\beta,\gamma\} = \{\lambda,\mu,\nu\}$.
Similarly, the last two identities hold because
$M \mapsto \tau^{-2}M$ is a determinant-preserving bijection
from $Y^+_{\alpha,\beta}$ to $Y^+_{\beta,\gamma}$.

The above claim, combined with \eqref{equation2.25}
and~\eqref{equation2.27}, easily implies the desired
result~\eqref{equation2.24}, thereby completing our proof.  Note that
we ``diagonally'' combine terms from the second and third lines
of~\eqref{equation2.27} to obtain the inner products with
$L_{\mu,\nu,b,c}$ and $L_{\nu,\lambda,c,a}$.
\end{proof}

\section{Relation to holomorphic Eisenstein series}
\label{section3}

Our result in Theorem~\ref{theorem2.8} says that
the expression~\eqref{equation2.24.5} is orthogonal to all cusp
forms.  Since this is true for arbitrary choices of $a$ and $b$
(with $c=-a-b$), we can expand everything into a polynomial in $a$ and
$b$, and obtain an expression for the coefficient of each
term $a^i b^j$; each such expression will be orthogonal to
$\cuspforms_k(\Gamma)$.

It is tempting to conclude that these expressions must therefore be
Eisenstein series on $\Gamma$. However the non-holomorphic weight~$2$
Eisenstein series, such as $E_{2,\lambda}$, cause problems.  We will
therefore take linear combinations, so as to obtain holomorphic
modular forms orthogonal to all holomorphic cusp forms; these new
expressions will be genuine holomorphic Eisenstein series.

We introduce the following lighter notation for Eisenstein series in
this section:

\begin{equation}
\label{equation3.1}
\begin{split}
A_\lambda = E_{1,\lambda},
\quad
B_\lambda = E_{2,\lambda},
&\quad
C_\lambda = E_{3,\lambda},
\quad
D_\lambda = E_{4,\lambda},
\\
Z_\lambda = E_{k-1,\lambda},
&\quad
Y_\lambda = E_{k-2,\lambda}.\\
\end{split}
\end{equation}
We recall that $E_{\ell,-\lambda} = (-1)^\ell E_{\ell,\lambda}$, so
the expressions $A_\lambda$ and $C_\lambda$ are odd functions of
$\lambda$, and in particular $A_0 = 0 = C_0$;
similarly, $B_\lambda$ and $D_\lambda$ are even functions of
$\lambda$.
We also remark that any difference $B_\lambda(z) - B_\mu(z)$ is a
holomorphic function of $z$.  In particular, as is classical, 
$B_\lambda - B_0$ is equal to the value of the Weierstrass
$\wp$-function (see for example~(3.5)
in~\cite{KKMModuliInterpretation}, as well as
the proof of Proposition~2.4 in that article):
\begin{equation}
\label{equation3.2}
B_\lambda(z) - B_0(z) = \wp(z_\lambda; \Z z + \Z),
\qquad\qquad
z_\lambda := \lambda_1 z + \lambda_2.
\end{equation}

In the notation of~\eqref{equation3.1}, Theorem~\ref{theorem2.8} then
states the following, after replacing $a,b,c$ with their complex
conjugates.  Recall that $\lambda + \mu + \nu \equiv 0 \bmod \Z^2$.
\begin{equation}
\label{equation3.3}
\begin{split}
&
a^w Z_\lambda A_\mu + a^{w-1}b Y_\lambda B_\mu + \cdots +
        a b^{w-1} B_\lambda Y_\mu + b^w A_\lambda Z_\mu \\
+&
b^w Z_\mu A_\nu + b^{w-1}(-a-b)Y_\mu B_\nu + \cdots +
        b (-a-b)^{w-1} B_\mu Y_\nu + (-a-b)^w A_\mu Z_\nu \\
+&
(-a-b)^w Z_\nu A_\lambda + (-a-b)^{w-1}a Y_\nu B_\lambda + \cdots +
        (-a-b)a^{w-1} B_\nu Y_\lambda + a^w A_\nu Z_\lambda\\
& \in \cuspforms_k(\Gamma)^\perp.\\
\end{split}
\end{equation}
Before discussing the general situation, we work out the identities
for some small weights directly, and relate them to elliptic functions.

\textbf{Weight $k=2$:}
Here $w=0$ and we have $A_\lambda A_\mu + A_\mu A_\nu + A_\nu
A_\lambda \in \cuspforms_2^\perp$.  This expression is already
holomorphic, and is hence a holomorphic Eisenstein series of
weight~$2$.  This result appears in Borisov-Gunnells (Propositions 3.7
and~3.8 of~\cite{BorisovGunnellsNonvanishing}) and is reproved as
equation~(4.10) of~\cite{KKMModuliInterpretation}.  An important
special case is when $\mu = 0$ and $\nu = -\lambda$.  We then have
$A_\mu = 0$ and $A_\nu = - A_\lambda$, and conclude that $A_\lambda^2$
is a holomorphic Eisenstein series (actually, this fact was used in
proving the more general result in \cite{BorisovGunnellsNonvanishing}
and~\cite{KKMModuliInterpretation}).  Combining these, we see that 
under our standing assumption
$\lambda + \mu + \nu \equiv 0 \bmod \Z^2$, the expression
$(A_\lambda + A_\mu + A_\nu)^2$ is a holomorphic weight~$2$ Eisenstein
series.  Let us assume that moreover $\lambda, \mu, \nu$ are all
nonzero, and choose specific representatives modulo $\Z^2$ for 
which $\lambda + \mu + \nu = 0$.  Let $\zeta$ and $\wp$ denote the
standard Weierstrass elliptic functions with respect to the lattice
$\Z z + \Z$;
then our assertion about $(A_\lambda + A_\mu + A_\nu)^2$ is in fact a
consequence of the classical identity 
$(\zeta(z_\lambda) + \zeta(z_\mu) + \zeta(z_\nu))^2 = \wp(z_\lambda) +
\wp(z_\mu) + \wp(z_\nu)$.  For a recent treatment, see for example
equation (3.8) and Corollary 3.13 of~\cite{KKMModuliInterpretation},
as well as the treatment in~\cite{Pasol}.

\textbf{Weight $k=3$:}
When $w=1$, we obtain from~\eqref{equation3.3} that
 $a B_\lambda A_\mu + b A_\lambda B_\mu + b B_\mu A_\nu
   + (-a-b) A_\mu B_\nu + (-a-b) B_\nu A_\lambda + a A_\nu B_\lambda
 \in \cuspforms_3^\perp$.  The coefficient of $a$ says that
$B_\lambda  A_\mu - A_\mu B_\nu - B_\nu A_\lambda + A_\nu B_\lambda
 \in \cuspforms_3^\perp$.
The coefficient of $b$ gives an equivalent identity upon exchanging
the roles of $\lambda$ and $\mu$.  We also note the identity arising
from the special case $\mu = 0, \nu = -\lambda$, which implies that
$A_\lambda B_\lambda \in \cuspforms_3^\perp$.  (The other case,
$\mu = -\lambda, \nu = 0$, also implies this fact, since the terms
$\pm A_\lambda B_0$ cancel; this phenomenon is specific to weight~$3$.)
Thus any nonholomorphic product $A_\lambda B_\mu$ is congruent modulo
$\cuspforms_3^\perp$ to the holomorphic modular form 
$A_\lambda(B_\mu - B_\lambda)$.  Combining this with our previous
assertion implies that
$(A_\lambda + A_\mu + A_\nu)(B_\lambda - B_\nu)$ 
is a holomorphic weight~$3$ Eisenstein series; it is in fact equal to
$C_\lambda - C_\nu$.  In the context of our remark for weight~$2$,
this is the classical identity
\begin{equation}
\label{equation3.3.5}
(\zeta(z_\lambda) + \zeta(z_\mu) + \zeta(z_\nu))(\wp(z_\lambda) - \wp(z_\mu))
= -2^{-1}(\wp'(z_\lambda) - \wp'(z_\mu)).
\end{equation}
See also~\cite{KKMModuliInterpretation}, Introduction and
equation~(4.39).

\textbf{Weight $k=4$:} The coefficient of $a^2$
in~\eqref{equation3.3} shows that $C_\lambda A_\mu + A_\mu C_\nu + C_\nu
A_\lambda - B_\nu B_\lambda + A_\nu C_\lambda \in \cuspforms_4^\perp$;
the coefficient of $b^2$ yields the same, upon exchanging $\lambda$
and $\mu$.  Moreover, the coefficient of $ab$ is
$B_\lambda B_\mu - B_\mu B_\nu + 2A_\mu C_\nu + 2C_\nu A_\lambda -
B_\nu B_\lambda \in \cuspforms_4^\perp$.  By taking the special case
$\mu = 0, \nu = -\lambda$, we deduce from either assertion above that
$2A_\lambda C_\lambda + B_\lambda^2 \in \cuspforms_4^\perp$ (once
again, terms with $B_0$ cancel).  On the other hand, the special case
$\nu = 0, \mu = -\lambda$ yields
$A_\lambda C_\lambda + B_0 B_\lambda \in \cuspforms_4^\perp$ and
$B_\lambda^2 - 2 B_\lambda B_0 \in \cuspforms_4^\perp$.  (This last
fact can alternatively be deduced from the previously computed
elements of $\cuspforms_4^\perp$.  Note also that specializing to 
$\lambda = 0, \nu = -\mu$ yields no new elements of
$\cuspforms_4^\perp$.)  We deduce in particular that  
$B_0^2 \equiv A_0 C_0 \equiv 0 \bmod \cuspforms_4^\perp$; this can
also be seen because $B_0^2$ is invariant under the full group
$SL_2(\Z)$, whence the inner product of $B_0^2$ with any cusp form $f$
on $\Gamma$ is essentially the inner product with the trace
$\tr^\Gamma_{SL(2,\Z)} f \in \cuspforms_4(SL(2,\Z)) = 0$.  It follows
that $(B_\lambda - B_0)^2 = B_\lambda^2 - 2B_\lambda B_0 + B_0^2
  \in \cuspforms_4^\perp$,
a fact that can be seen directly from the classical identity
$6D_\lambda = \wp''(z_\lambda) = 6\wp(z_\lambda)^2 - 30 D_0$. 

To wrap up the case of weight $k=4$, we construct holomorphic modular
forms that are congruent (modulo $\cuspforms_4^\perp$) to the
nonholomorphic products of the form $B_\lambda B_\mu$.  One way to do
this is to write
$B_\lambda B_\mu
 = (B_\lambda - B_0)(B_\mu - B_0) + B_\lambda B_0 + B_\mu B_0 - B_0^2
 \equiv
  (B_\lambda - B_0)(B_\mu - B_0) - A_\lambda C_\lambda - A_\mu C_\mu
(+ 0)$.
Using this congruence,
we obtain from the coefficients of $a^2$ and $ab$ in the previous
paragraph the statement that the following two expressions are
holomorphic Eisenstein series of weight~$4$: $(A_\lambda + A_\mu +
A_\nu)(C_\lambda + C_\nu) -  (B_\lambda - B_0)(B_\nu - B_0)$ and
$(B_\lambda - B_0)(B_\mu - B_0) - (B_\lambda - B_0)(B_\nu - B_0)
 - (B_\mu - B_0)(B_\nu - B_0) + 2(A_\lambda + A_\mu + A_\nu)C_\nu$.
We leave it to the reader to verify that it would
have sufficed to prove that either one of the above expressions was an
Eisenstein series (for all permutations of $(\lambda,\mu,\nu)$)
in order to deduce the same about the other
expression.

\textbf{The situation for general weight $k$:}
Here we shall content ourselves with showing that every potentially
nonholomorphic term in~\eqref{equation3.3}, such as $Y_\lambda B_\mu$,
can be modified by an appropriate element of $\cuspforms_k^\perp$ to
obtain a holomorphic form.  To do this, consider the coefficient of
$a^w$ in~\eqref{equation3.3} in the special case $\mu = 0, \nu =
-\lambda$.  This coefficient is
\begin{equation}
\label{equation3.4}
\begin{split}
Z_\lambda A_0 &+ (-1)^w A_0 Z_{-\lambda} \text{ [both these terms are zero]}\\
  & + (-1)^w Z_{-\lambda} A_\lambda +
(-1)^{w-1}Y_{-\lambda}B_\lambda + \cdots + (-1)B_{-\lambda}Y_\lambda +
A_{-\lambda} Z_\lambda \\
 = -(Z_\lambda A_\lambda &+ Y_\lambda B_\lambda + \cdots
     + B_\lambda Y_\lambda + A_\lambda Z_\lambda)
 \in \cuspforms_k^\perp.\\
\end{split}
\end{equation}
After dividing by~$-2$, we obtain that $B_\lambda Y_\lambda$ is
congruent modulo $\cuspforms_k^\perp$ to a holomorphic expression in
terms of the other products such as $A_\lambda Z_\lambda$.  This
allows us to rewrite any general product such as $Y_\lambda B_\mu$
as $Y_\lambda(B_\mu - B_\lambda) + B_\lambda Y_\lambda \equiv
Y_\lambda(B_\mu - B_\lambda) + $ something holomorphic.
The careful reader will note that our discussion above seems to be
restricted to $k\geq 5$, because we implicitly assumed that the terms
$Y_\lambda B_\lambda$ and $B_\lambda Y_\lambda$ were distinct;
moreover, when $k=4$, the $Y$s are actually $B$s, and 
$B_\lambda(B_\mu - B_\lambda)$ is not holomorphic.  However, it turns
out upon more careful investigation that the above technique still
works for $2 \leq k \leq 4$, and in any case, we have already written
down explicit formulas for these small weights.

The cleanest way to find holomorphic relations in all weights is to do
so on the level of~\eqref{equation3.3}, instead of teasing out each
coefficient of $a^i b^j$ separately.  The result is even better
expressed on the level of a generating series, in which setting it
becomes a different proof of the main result in \cite{Pasol} (see most
notably the top of p.~16).  We shall
go back to writing the result in terms of $a,b,c$ where $a+b+c=0$,
instead of substituting $c=-a-b$.

\begin{theorem}
\label{theorem3.1}
Take $k,w,\lambda,\mu,\nu,a,b,c$, be as in Theorem~\ref{theorem2.8}.
Then the following expression is a holomorphic Eisenstein series of
weight~$k$:
\begin{equation}
\label{equation3.5}
\begin{split}
&(A_\lambda + A_\mu + A_\nu)(a^w Z_\lambda + b^w Z_\mu + c^w Z_\nu)\\
+&
(a B_\lambda + b B_\mu + c B_\nu)
                (a^{w-1} Y_\lambda + b^{w-1} Y_\mu + c^{w-1} Y_\nu)\\
+& \cdots \\
+&
(a^{w-1} Y_\lambda + b^{w-1} Y_\mu + c^{w-1} Y_\nu)
                (a B_\lambda + b B_\mu + c B_\nu)\\
+&
(a^w Z_\lambda + b^w Z_\mu + c^w Z_\nu)(A_\lambda + A_\mu + A_\nu).\\
\end{split}
\end{equation}
Assume furthermore that $\abs{a},\abs{b},\abs{c}$ are sufficiently
small.  Define power series
\begin{equation}
\label{equation3.6}
F_\lambda(a) = A_\lambda + a B_\lambda + a^2 C_\lambda + \cdots,
\qquad\qquad\text{similarly for }
F_\mu(b), F_\nu(c).
\end{equation}
Then~\eqref{equation3.5} says that the power series expansion of 
$(F_\lambda(a) + F_\mu(b) + F_\nu(c))^2$ in a neighborhood of zero in
the hyperplane $a+b+c=0$ 
has as its degree $w$ term a holomorphic Eisenstein series of weight
$w+2$.
In the special case when $\lambda,\mu,\nu$ are all nonzero modulo
$\Z^2$ and satisfy $\lambda+\mu+\nu = 0$ (not just congruence modulo
$\Z^2$), we can see this from the fact (\cite{Pasol}, Theorem~3.1 and
Observation~4.2.5) that
\begin{equation}
\label{equation3.7}
\begin{split}
F_\lambda(a) + F_\mu(b) + F_\nu(c) &= 
\zeta(z_\lambda - a) + \zeta(z_\mu - b) + \zeta(z_\nu - c),\\
(F_\lambda(a) + F_\mu(b) + F_\nu(c))^2
   &= \wp(z_\lambda - a) + \wp(z_\mu - b) + \wp(z_\nu - c)\\
    = B_\lambda + B_\mu + B_\nu &- 3B_0
                      + 2(a C_\lambda + b C_\mu + c C_\nu)\\
   + 3(a^2 D_\lambda &+ b^2 D_\mu + c^2 D_\nu) + \cdots.\\
    \end{split}
\end{equation}
In other words, in this special case, the expression
in~\eqref{equation3.5} is equal to the holomorphic Eisenstein series
$(w+1)(a^w E_{w+2,\lambda} + b^w E_{w+2,\mu} + c^w E_{w+2,\nu})$ when
$w \geq 1$.  For the case $w=0$, the value is
$E_{2,\lambda} + E_{2,\mu} + E_{2,\nu} - 3E_{2,0}$.
\end{theorem}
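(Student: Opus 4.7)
My plan is to verify that (3.5) is a holomorphic Eisenstein series by separately checking holomorphy and orthogonality to all cusp forms; the generating-series reformulation is then immediate, and the explicit formula in the special case follows from the cited elliptic-function identity (3.7). I would introduce the shorthand $S_j(z) := a^j E_{j+1,\lambda}(z) + b^j E_{j+1,\mu}(z) + c^j E_{j+1,\nu}(z)$ and observe that (3.5) is precisely $\sum_{j_1+j_2=w,\, j_1,j_2 \geq 0} S_{j_1}(z) S_{j_2}(z)$, which is the degree-$w$ homogeneous component in $(a,b,c)$ of $(F_\lambda(a) + F_\mu(b) + F_\nu(c))^2$; this gives the generating-series reformulation at once. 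For holomorphy, the only $S_j$ that could fail to be holomorphic is $S_1 = a B_\lambda + b B_\mu + c B_\nu$; since each of $B_\lambda, B_\mu, B_\nu$ has the same nonholomorphic part $-\pi y^{-1}$, we obtain $S_1 = -\pi y^{-1}(a+b+c) + (\text{holomorphic}) = (\text{holomorphic})$ by the hypothesis $a+b+c=0$. Hence every $S_j$ is holomorphic, and since each product $E_{\ell,\alpha} E_{m,\beta}$ is a modular form of weight $k$ for $\Gamma$, so is (3.5).

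For orthogonality to cusp forms, I would expand (3.5) into ``off-diagonal'' terms (where the two Eisenstein indices differ) and ``diagonal'' terms (where they agree). For any pair such as $\{\lambda,\mu\}$, the ordered choices $(\lambda,\mu)$ and $(\mu,\lambda)$ give equal sums after relabeling $\ell \leftrightarrow m$ and invoking commutativity of multiplication, so the off-diagonal part equals $2 \times (3.3)$, which lies in $\cuspforms_k(\Gamma)^\perp$ by Theorem~\ref{theorem2.8}. The diagonal part is $a^w D_\lambda + b^w D_\mu + c^w D_\nu$, where $D_\alpha := \sum_{\ell+m=k,\, \ell,m\geq 1} E_{\ell,\alpha} E_{m,\alpha}$. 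To show each $D_\alpha \in \cuspforms_k(\Gamma)^\perp$, I would apply Theorem~\ref{theorem2.8} to the triple $(\alpha,-\alpha,0)$ with $(a',b',c') = (1,-1,0)$: the two sums in~\eqref{equation2.24.5} involving the index $0$ vanish because $0^{m-1} = \delta_{m,1}$ forces $m=1$ while $E_{1,0}=0$, and the remaining sum collapses via $E_{m,-\alpha} = (-1)^m E_{m,\alpha}$ to $-D_\alpha$. Combining holomorphy and orthogonality yields the first assertion.

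For the explicit formula in the special case where $\lambda,\mu,\nu$ are nonzero and $\lambda+\mu+\nu = 0$ exactly, I would invoke Pa\c{s}ol's identification $F_\alpha(\xi) = \zeta(z_\alpha - \xi)$, note that $(z_\lambda - a) + (z_\mu - b) + (z_\nu - c) = 0$, and apply the classical identity $(\zeta(u_1)+\zeta(u_2)+\zeta(u_3))^2 = \wp(u_1)+\wp(u_2)+\wp(u_3)$ whenever $u_1+u_2+u_3=0$. Taylor-expanding $\wp(z_\alpha-\xi)$ about $\xi=0$, using $\wp(z_\alpha) = B_\alpha - B_0$ together with $(-1)^j \wp^{(j)}(z_\alpha)/j! = (j+1) E_{j+2,\alpha}$ for $j\geq 1$, one reads off the degree-$w$ coefficient to obtain the stated formulas. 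The main obstacle I anticipate is the bookkeeping in the orthogonality step, specifically matching the off-diagonal expansion of (3.5) with exactly $2\times(3.3)$ and carefully handling the signs and the vanishing of ``index $0$'' contributions when applying Theorem~\ref{theorem2.8} to the triple $(\alpha,-\alpha,0)$.
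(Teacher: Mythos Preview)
Your proposal is correct and follows essentially the same route as the paper: both arguments expand (3.5) into the nine ordered-pair contributions, observe that the six off-diagonal pairs give exactly twice the expression~(3.3) (hence lie in $\cuspforms_k^\perp$ by Theorem~\ref{theorem2.8}), handle the three diagonal terms $a^w D_\lambda, b^w D_\mu, c^w D_\nu$ via a degenerate specialization of Theorem~\ref{theorem2.8}, and check holomorphy by noting that $aB_\lambda+bB_\mu+cB_\nu$ is holomorphic since $a+b+c=0$. The only cosmetic difference is that the paper cites the already-recorded identity~(3.4) for the diagonal part (the $a^w$-coefficient of~(3.3) at $(\lambda,0,-\lambda)$), whereas you re-derive the same fact by applying Theorem~\ref{theorem2.8} to $(\alpha,-\alpha,0)$ with $(a',b',c')=(1,-1,0)$; these are equivalent specializations.
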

\begin{remark}
\label{remark3.2}
The case where one or three of $\lambda,\mu,\nu$ are zero modulo
$\Z^2$ is also proved in \cite{Pasol}, Proof of Theorem 0.1
(pp.~15--17), and one can obtain in principle explicit values for the
expression  of~\eqref{equation3.5} for those cases as well.
\end{remark}
\begin{proof}[Proof of Theorem~\ref{theorem3.1}]
We originally obtained the first assertion as the result of taking
twice the expression in~\eqref{equation3.3} and replacing each
possibly nonholomorphic term such as $Y_\lambda B_\mu$ by something
holomorphic congruent to it modulo $\cuspforms_k^\perp$, as described
in the discussion following~\eqref{equation3.4}.  But now that we
have found the expression~\eqref{equation3.5}, the easiest way to show
that it is orthogonal to all cusp forms is to expand each line
of~\eqref{equation3.5} into nine terms, which we index by the ordered
pairs
$\{(\lambda,\lambda),(\lambda,\mu),(\lambda,\nu),(\mu,\lambda),\dots,
(\nu,\nu)\}$.
The sum of all terms corresponding to $(\mu,\lambda)$, $(\nu,\mu)$,
and $(\lambda,\nu)$ is equal to the expression in~\eqref{equation3.3},
which belongs to $\cuspforms_k^\perp$.  Similarly, the terms
corresponding to $(\lambda,\mu)$, $(\mu,\nu)$, and $(\nu,\lambda)$ add
up to a rearrangement of~\eqref{equation3.3} (read each row
of~\eqref{equation3.3} from right to left).  We are left with the
sums of terms corresponding to each of $(\lambda,\lambda)$,
$(\mu,\mu)$, and $(\nu,\nu)$, each of which yields a multiple of an
expression like~\eqref{equation3.4}.  Thus we obtain
that~\eqref{equation3.5} is orthogonal to all holomorphic cusp forms.
To see that it is holomorphic (and hence an Eisenstein series), it
suffices to observe that the only potentially nonholomorphic part
of~\eqref{equation3.5} comes from the expression
\begin{equation}
\label{equation3.8}
a B_\lambda + b B_\mu + c B_\nu = a B_\lambda + b B_\mu + (-a-b) B_\nu
= a(B_\lambda - B_\nu) + b(B_\mu - B_\nu),
\end{equation}
which is holomorphic, after all.

As for the second assertion, namely~\eqref{equation3.7}, it boils down
in light of~\eqref{equation3.3.5} to verifying (i) the series
expansion in terms of $(a,b,c)$ near $0$ of the holomorphic function
$\zeta(z_\lambda - a) + \zeta(z_\mu - b) + \zeta(z_\nu - c)$,
and (ii) the holomorphic series expansion
$\wp(z_\lambda - a) = B_\lambda - B_0 + 2aC_\lambda + 3a^2 D_\lambda +
\cdots$.  The expansion (ii) is a straightforward application of
Taylor's theorem; since $\zeta' = \wp$, we deduce (i), possibly up to
a constant, by integration.  It remains to observe that
$A_\lambda + A_\mu + A_\nu = \zeta(z_\lambda) + \zeta(z_\mu) +
\zeta(z_\nu)$, which follows from Corollary~3.13
of~\cite{KKMModuliInterpretation}.
\end{proof}

\section{Partial results on Hecke operators}
\label{section4}

In this final section, we prove a result that is related to taking the
trace of a product $E_{\ell,\lambda} E_{m,\mu}$ of two Eisenstein
series from a higher level $\Gamma(NM)$ to a lower level $\Gamma(M)$.
This is also related to the question of how Hecke operators act on a
product of two Eisenstein series.  The results we obtain are somewhat
indirect, but of interest; they generalize both the statement and
proof of Proposition~4.6 and the first half of Proposition~4.11
in~\cite{KKMModuliInterpretation}, which basically deal with the cases
$\ell = 1$ and $m \in \{1,2\}$ of the general result in this section.

As mentioned in~\cite{KKMModuliInterpretation}, most particularly
Proposition~4.6 there, a key case of such a trace is to be able to
express the sum
\begin{equation}
\label{equation4.1}
  \sum_{\tau \in N^{-1}\Z^2/\Z^2} E_{\ell,\lambda + \tau} E_{m, \mu - S \tau}
\end{equation}
as a suitable linear combination of products of Eisenstein series.
Here $\lambda,\mu \in M^{-1}\Z^2$, and $S \in \Z$; without loss of
generality, $0 \leq S < N$.  The expression $\tau \in N^{-1}\Z^2/\Z^2$
means that $\tau = (\tau_1,\tau_2)$ ranges over any set of
representatives of these cosets: for example, $\tau_1, \tau_2$ can vary
independently in $\{0,\, 1/N,\, 2/N,\, \dots,\, (N-1)/N\}$.
Thus our notation $(\lambda,\mu,\tau,N,M,S)$ here corresponds to
$(A,B,T,n,\ell,s)$ in Proposition~4.6 of~\cite{KKMModuliInterpretation}.
Note that each individual term $E_{\ell,\lambda + \tau} E_{m, \mu - S \tau}$
can have level up to $MN$, but the sum transforms according
to $\Gamma(M)$.  Our main result is that the sum in~\eqref{equation4.1}
can be written (up to elements orthogonal to cusp forms) in terms of
a linear combination of products of two Eisenstein series on the lower
level $M$, each product being of the form
$E_{\hat{\ell}, p\lambda + q\mu} E_{\hat{m}, r\lambda + s\mu}$,
with $(p,q,r,s)$ taking the role of $(a,b,c,d)$ from our earlier
article.

It turns out to be simpler to state the result using the linear
combinations $L_{\lambda,\mu,a,b}(z,s)$ from~\eqref{equation2.22},
where $a,b \in \C$ are arbitrary parameters; then, taking the
coefficient of $\overline{a}^{\ell-1} \overline{b}^{m-1}$ in any such
expression below gives us the value of~\eqref{equation4.1}.

\begin{proposition}
\label{proposition4.1}
Fix $k \geq 2$ and $w = k-2$ as usual.  
Let $(\lambda,\mu,\tau,N,M,S)$ be as in the preceding discussion.
Then there exist finitely many matrices $\stwomatr{p_i}{q_i}{r_i}{s_i}
\in M_2(\Z)$ and constants $c_i \in \Z$, all depending only on $N$ and $S$
but not on any other parameters, such that we have a congruence modulo
$\cuspforms_k^\perp$
of the form:
\begin{equation}
\label{equation4.2}
  \sum_{\tau \in N^{-1}\Z^2/\Z^2} L_{\lambda + \tau,\mu - S \tau,a,b}
 \equiv
  N^{\overline{s}+1} \sum_i c_i
      L_{p_i \lambda + q_i \mu, r_i \lambda + s_i \mu, 
         p_i a + q_i b, r_i a + s_i b}.
\end{equation}
Here $s$ is the complex parameter in the Eisenstein series; we mainly
care about the case $s=0$.  The matrices
$\stwomatr{p_i}{q_i}{r_i}{s_i}$ all satisfy
\begin{equation}
\label{equation4.3}
\det \stwomatr{p_i}{q_i}{r_i}{s_i} = N, \qquad\qquad
p_i - S q_i \equiv r_i - S s_i \equiv 0 \pmod{N}.
\end{equation}
\end{proposition}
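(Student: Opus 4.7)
The plan is to verify the congruence~\eqref{equation4.2} modulo $\cuspforms_k^\perp$ by pairing both sides with an arbitrary cusp form $f \in \cuspforms_k$ and applying Proposition~\ref{proposition2.5} to reduce the identity to a matching of matrix orbits. For each $\tau \in N^{-1}\Z^2/\Z^2$, Proposition~\ref{proposition2.5} expresses $\ipGamma{f}{L_{\lambda+\tau,\mu-S\tau,a,b}(z,s)}$ as a linear combination, over $M \in Y^+_{\lambda+\tau,\mu-S\tau} \cup Y^+_{\mu-S\tau,\lambda+\tau}$, of weighted period pairings $(\det M)^{-s-k/2}\ipperiod{f|_k M^{-1}}{(-bz+a)^w}$ (and the $(a,b)$-swapped partner). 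Summing over $\tau$ amalgamates the row-congruence conditions into a single one: $M = \stwomatr{a}{b}{c}{d}$ has each column lying in the coset $\stwovec{\lambda_j}{\mu_j} + N^{-1}\Z \cdot \stwovec{1}{-S} + \Z^2$, the shift $\stwovec{1}{-S}$ arising because row~1 acquires a translate $\tau$ while row~2 acquires $-S\tau$.

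The congruence~\eqref{equation4.3} on $\gamma_i = \stwomatr{p_i}{q_i}{r_i}{s_i}$ can be rewritten as $\gamma_i \stwovec{1}{-S} \in N\Z^2$; in other words, $\gamma_i$ has determinant $N$ and annihilates the vector $\stwovec{1}{-S}$ modulo~$N$. This is exactly the structure needed to absorb the $N^{-1}\Z \cdot \stwovec{1}{-S}$-shifts identified in the previous step. I would construct the finite family $\{(\gamma_i, c_i)\}$ so that, modulo $\Gamma(M)$, every LHS matrix $M$ factors uniquely as $M = \gamma_i M''$ with $M'' \in X^+_{p_i\lambda+q_i\mu,\, r_i\lambda+s_i\mu}$, and $c_i$ accounting for multiplicities in the coset decomposition. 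The determinant identity $\det M = N \det M''$ gives $(\det M)^{-s-k/2} = N^{-s-k/2}(\det M'')^{-s-k/2}$, and this combines with the $N^{\overline{s}+1}$ prefactor (which conjugates to $N^{s+1}$ once pulled inside the inner product) to produce the required overall scaling.

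Applying Proposition~\ref{proposition2.5} to each RHS summand $L_{p_i\lambda+q_i\mu,\, r_i\lambda+s_i\mu,\, p_ia+q_ib,\, r_ia+s_ib}$ yields period pairings against $(-(r_ia+s_ib)z + (p_ia+q_ib))^w$, which is precisely the polynomial obtained by substituting $\gamma_i \stwovec{a}{b}$ for $\stwovec{a}{b}$ in $(-bz+a)^w$. Under the factorization $M = \gamma_i M''$ we have $f|_k M^{-1} = (f|_k(M'')^{-1})|_k \gamma_i^{-1}$, so a change of variable $z \mapsto \gamma_i z$ inside the period integral, combined with the slash-action identity $(-bz+a)^w|_{-w}\gamma_i = N^{-w/2}(-(r_ia+s_ib)z + (p_ia+q_ib))^w$, identifies each LHS period pairing with its RHS counterpart up to the claimed scalar factor. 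The zero-determinant strata $X^0$ contribute nothing on either side thanks to the cuspidality of $f$, as in the proof of Proposition~\ref{proposition2.5}.

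The main obstacle lies in the combinatorial heart of the argument: explicitly identifying the family $\{(\gamma_i, c_i)\}$ of determinant-$N$ matrices satisfying~\eqref{equation4.3} that correctly enumerates the LHS congruence classes, and verifying that the factorization $M = \gamma_i M''$ really gives a bijection between LHS matrix orbits and the disjoint union of RHS orbits indexed by~$i$. A secondary technical difficulty is that $\gamma_i^{-1}$ does not preserve the contour $0 \to i\infty$ in general, so the change of variable produces endpoints at the cusps $\gamma_i^{-1}(0), \gamma_i^{-1}(i\infty)$; one closes up the resulting broken contours via the Manin relations~\eqref{equation1.7} (or their analogues for determinant-$N$ matrices), in the same spirit as how $\tau$ was used in the proof of Theorem~\ref{theorem2.8}.
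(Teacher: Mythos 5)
Your proposal is a genuinely different strategy from the paper's, and it correctly identifies the key structural facts: summing over $\tau$ amalgamates the row congruences into the condition that each column of $M$ lies in $\stwovec{\lambda_j}{\mu_j} + N^{-1}\Z\cdot\stwovec{1}{-S} + \Z^2$, and condition~\eqref{equation4.3} says exactly that $\gamma_i$ carries the index-$N$ overlattice $N^{-1}\Z\stwovec{1}{-S}+\Z^2$ onto $\Z^2$. (One small correction: the natural map is $M''=\gamma_i M$, not $M=\gamma_i M''$; left multiplication by a \emph{single} $\gamma$ satisfying~\eqref{equation4.3} already gives a clean bijection from $\bigcup_\tau X^+_{\lambda+\tau,\mu-S\tau}$ onto $X^+_{p\lambda+q\mu,\,r\lambda+s\mu}$ compatible with the right $\Gamma$-action, so the orbit matching you call the ``main obstacle'' is actually the easy part.) However, the proposal has a genuine gap, and it sits precisely in what you call the ``secondary technical difficulty.'' After the substitution $f|_k M^{-1} = (f|_k (M'')^{-1})|_k\gamma_i$, the period integral runs along the geodesic from $\gamma_i^{-1}(0)$ to $\gamma_i^{-1}(i\infty)$, and to return to pairings of the form $\ipperiod{\,\cdot\,}{\,\cdot\,}$ one must decompose this path into unimodular segments. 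That decomposition is where the finitely many matrices $\gamma_i = A_j\gamma$ and the coefficients $c_i$ actually come from; it is the entire content of the proposition, and you have not carried it out or even indicated which continued-fraction/Euclidean data would control it. As written, both hard steps are deferred, so this is a plan rather than a proof.

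For comparison, the paper never returns to the unfolding of Proposition~\ref{proposition2.5} at all. It works entirely on the level of the $L$-functions and argues by induction on $S$ (with $0\le S<N$), using only two ingredients: the distribution relation $\sum_\tau E_{\ell,\lambda+\tau}(z,\overline{s}) = N^{\ell+\overline{s}}E_{\ell,N\lambda}(z,\overline{s})$, which settles $S=0$ via~\eqref{equation4.4}, and the three-term identity of Theorem~\ref{theorem2.8}, which settles $S=1$ via~\eqref{equation4.5}. For general $S$ it rewrites the sum over $\tau$ with twist $S\tau$ as a double sum over $\tau$ and $\sigma\in S^{-1}\Z^2/\Z^2$, applies the $S=1$ case, and then invokes the inductive hypothesis with the roles of $N$ and $S$ reversed and $N$ replaced by $N\bmod S$ --- a Euclidean-algorithm descent that produces the matrices $\stwomatr{p_i}{q_i}{r_i}{s_i}$ and verifies~\eqref{equation4.3} explicitly at each step. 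This induction is, in effect, the continued-fraction decomposition your approach would need; if you want to complete your route, you should expect to reprove exactly that combinatorics in the guise of writing the path from $\gamma^{-1}(0)$ to $\gamma^{-1}(i\infty)$ as a sum of Manin paths.
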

\begin{proof}
In the above sum, the value of $S$~matters only modulo~$N$, so, as
already noted, we may assume that $0 \leq S < N$.  The proof is then
by induction on $S$, and follows closely the technique of
Proposition~4.6 in~\cite{KKMModuliInterpretation}.  The base case
$S=0$ is easy to dispose of, since the definition of
Eisenstein series immediately implies that
$\sum_{\tau \in N^{-1}\Z^2/\Z^2} E_{\ell,\lambda+\tau}(z,\overline{s})
    = N^{\ell + \overline{s}} E_{\ell, N\lambda}(z,\overline{s})$.
This implies that
\begin{equation}
\label{equation4.4}
  \sum_{\tau \in N^{-1}\Z^2/\Z^2} L_{\lambda + \tau,\mu,a,b}(z,s)
  = N^{\overline{s}+1} L_{N\lambda,\mu,Na,b}(z,s).
\end{equation}
So here there is only one matrix,
$\stwomatr{p}{q}{r}{s} = \stwomatr{N}{0}{0}{1}$.

The next case, $S=1$, uses the relation 
$L_{\lambda+\tau,\mu-\tau,a,b}(z,s)
 + L_{\mu-\tau,-\lambda-\mu,b,-a-b}(z,s)
 + L_{-\lambda-\mu,\lambda+\tau,-a-b,a}(z,s)
\equiv 0 \pmod{\cuspforms_k^\perp}$,
which is essentially~\eqref{equation2.24}.  Adding up over all $\tau$,
and using~\eqref{equation4.4}, we obtain
\begin{equation}
\label{equation4.5}
  \sum_{\tau \in N^{-1}\Z^2/\Z^2} L_{\lambda + \tau,\mu - \tau,a,b}
\equiv N^{\overline{s}+1} \bigl(
-  L_{N\mu,-\lambda-\mu,Nb,-a-b} - L_{-\lambda-\mu,N\lambda,-a-b,Na}
\bigr).
\end{equation}
Thus the matrices are 
$\stwomatr{p_i}{q_i}{r_i}{s_i}
   = \stwomatr{0}{N}{-1}{-1}, \stwomatr{-1}{-1}{N}{0}$.
Note that the second term above is the result of
combining~\eqref{equation4.4} with~\eqref{equation2.23.5}.

For larger~$S$, we transform the sum involving $S\tau$ into one
involving just $\tau$, at the expense of introducing a sum over
$\sigma \in S^{-1}\Z^2/\Z^2$; this uses~\eqref{equation4.4},
with $S$ instead of $N$, as follows.  (Using $-\sigma$ instead of
$\sigma$ is more convenient for later.)
\begin{equation}
\label{equation4.6}
  \sum_{\tau \in N^{-1}\Z^2/\Z^2} L_{\lambda + \tau,\mu - S\tau,a,b}
=   \sum_{\tau \in N^{-1}\Z^2/\Z^2} 
        S^{-\overline{s}-1}
        \sum_{\sigma \in S^{-1}\Z^2/\Z^2}
              L_{\lambda + \tau, \mu/S - \tau - \sigma, a, b/S}.
\end{equation}
At this point, we can carry out the sum over $\tau$ first,
using~\eqref{equation4.5}.  The result is congruent (mod
$\cuspforms_k^\perp$) to
\begin{equation}
\label{equation4.7}
(N/S)^{\overline{s}+1}
   \sum_{\sigma \in S^{-1}\Z^2/\Z^2}
 \bigl(
-  L_{N\mu/S - N\sigma,-\lambda-\mu/S + \sigma,Nb/S,-a-b/S}
- L_{-\lambda-\mu/S + \sigma,N\lambda,-a-b/S,Na}
\bigr).
\end{equation}
We first address the sum over $\sigma$ of the second $L$ in the above
expression.  Taking into account the factor $(N/S)^{\overline{s}+1}$,
this yields
$-N^{\overline{s}+1} L_{-S\lambda - \mu, N\lambda, -Sa-b,Na}$; the
corresponding matrix is
$\stwomatr{p}{q}{r}{s} = \stwomatr{-S}{-1}{N}{0}$.
We next deal with the sum over $\sigma$ of the first $L$
in~\eqref{equation4.7}.  This sum is of the same type as our original
sum over $\tau$, but with the roles of $N$ and $S$ reversed.  However,
the term $N\sigma$ can be replaced by $N'\sigma$, where
$N' = N \bmod S$.  Since $N' < S$, we 
can use induction to conclude that this last sum, combined with the
external factor $(N/S)^{\overline{s}+1}$, is congruent
(mod $\cuspforms_k^\perp$) to an expression of the form 
\begin{equation}
\label{equation4.8}
  (N/S)^{\overline{s}+1} \cdot S^{\overline{s}+1}
 \sum_i c_i
      L_{\hat{p}_i \hat{\lambda} + \hat{q}_i \hat{\mu},
         \hat{r}_i \hat{\lambda} + \hat{s}_i \hat{\mu},
         \hat{p}_i (Nb/S) + \hat{q}_i (-a - b/S),
         \hat{r}_i (Nb/S) + \hat{s}_i (-a - b/S)},
\end{equation}
where $\hat{\lambda} = N\mu/S$ and $\hat{\mu} = -\lambda - \mu/S$;
moreover, for each $i$ in the sum, we have
$\det\stwomatr{\hat{p}_i}{\hat{q}_i}{\hat{r}_i}{\hat{s}_i} = S$
and 
$-N'\hat{p}_i + \hat{q}_i \equiv -N'\hat{r}_i + \hat{s}_i \equiv 0 \pmod{S}$.
Now each term above in the sum over $i$ can be rewritten in terms of
the original $\lambda,\mu$ as
$L_{p_i \lambda + q_i \mu, r_i \lambda + s_i \mu, 
    p_i a + q_i b, r_i a + s_i b}$,
where
$\twomatr{p_i}{q_i}{r_i}{s_i}
      = \twomatr{- \hat{q}_i}{(\hat{p}_i N - \hat{q_i})/S}
                {- \hat{s}_i}{(\hat{r}_i N - \hat{s_i})/S}$.
We leave it to the reader to check that these matrices are integral
and that they satisfy~\eqref{equation4.3}.  Our proof by induction is
now complete.
\end{proof}

\begin{corollary}
\label{corollary4.2}
Consider a sum as in~\eqref{equation4.1}, except that if $\ell=2$ or
$m=2$ we replace every nonholomorphic expression such as
$E_{2,\lambda+\tau}$, wherever it appears, with the holomorphic
expression
$\tilde{E}_{2,\lambda+\tau} = E_{2,\lambda+\tau} - E_{2,0}$.
Then the resulting sum is a holomorphic modular form, and is congruent
(modulo holomorphic Eisenstein series) to a linear combination of
products of two holomorphic Eisenstein series of level $M$.
\end{corollary}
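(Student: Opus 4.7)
The plan is to proceed in two stages: first establish holomorphicity by a direct case analysis, and then leverage Proposition~\ref{proposition4.1} to obtain the congruence.

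For holomorphicity, I will examine the possible values of $\ell$ and $m$. If $\ell, m \neq 2$, no replacement takes place and each summand $E_{\ell,\lambda+\tau} E_{m,\mu-S\tau}$ is already holomorphic. If exactly one of $\ell, m$ equals $2$, say $\ell=2$, then $\tilde E_{2,\lambda+\tau} = E_{2,\lambda+\tau} - E_{2,0}$ is holomorphic (the $-\pi y^{-1}$ singularities cancel), so each product $\tilde E_{2,\lambda+\tau} E_{m,\mu-S\tau}$ is a holomorphic weight-$k$ modular form on $\Gamma(MN)$. The case $\ell=m=2$ (which only arises when $k=4$) is analogous. In every case the modified sum transforms as a weight-$k$ form on $\Gamma(M)$ and is therefore a holomorphic modular form on $\Gamma(M)$.

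For the congruence claim, I will specialize Proposition~\ref{proposition4.1} to $s = 0$ and extract the coefficient of $\overline{a}^{\ell-1}\overline{b}^{m-1}$ on both sides. This yields an identity
\[
\sum_\tau E_{\ell,\lambda+\tau}\,E_{m,\mu-S\tau} = R + F_0,\qquad F_0 \in \cuspforms_k^\perp,
\]
where $R$ is an explicit $\Z$-linear combination of products $E_{\hat\ell,\,p_i\lambda+q_i\mu}\,E_{\hat m,\,r_i\lambda+s_i\mu}$ of Eisenstein series of level $M$, with $\hat\ell+\hat m = k$. Applying the replacement $E_{2,\cdot}\to\tilde E_{2,\cdot}$ to each weight-$2$ factor on both sides produces an equality of the modified sum with a new right-hand side $\tilde R$ that is, by construction, a $\C$-linear combination of products of two holomorphic Eisenstein series of level $M$. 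A bookkeeping calculation then gives
\[
D := (\text{modified sum}) - \tilde R = F_0 + E_{2,0}\cdot Y,
\]
where $Y$ is an explicit holomorphic Eisenstein series of weight $k-2$ on $\Gamma(M)$, built from the coefficients of the $E_{2,\cdot}$-factors on each side (with an additional $E_{2,0}^2$ term when $k=4, \ell=m=2$). Both sides are holomorphic, so $D$ is a holomorphic modular form on $\Gamma(M)$; by the orthogonal decomposition $\modforms_k = \cuspforms_k \directsum \eisforms_k$, the desired conclusion $D \in \eisforms_k(\Gamma(M))$ reduces to showing $\ipGamma{f}{E_{2,0}\cdot Y} = 0$ for every $f \in \cuspforms_k(\Gamma(M))$.

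The main obstacle is this last orthogonality verification. The plan is to use Proposition~\ref{proposition2.5} to express each $\ipGamma{f}{E_{2,0}\, E_{k-2,\nu}}$ as an explicit sum of periods of $f$, and then to exploit the specific structure of the matrices $\stwomatr{p_i}{q_i}{r_i}{s_i}$ from the inductive proof of Proposition~\ref{proposition4.1}, together with the Manin relations expressed in Theorem~\ref{theorem2.8}, to combine the resulting period sums into an expression that vanishes identically. This step is expected to parallel and generalize the computation in the first half of Proposition~4.11 of~\cite{KKMModuliInterpretation}, and is precisely the point where an extra idea would be needed to extend the Hecke-compatibility results to higher weights, as mentioned in the introduction.
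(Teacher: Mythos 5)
Your reduction is set up correctly until the last step, but the last step is a genuine gap, not a routine verification: you reduce the corollary to showing $\ipGamma{f}{E_{2,0}\cdot Y}=0$ for a certain holomorphic Eisenstein series $Y$ of weight $k-2$, and then you do not prove it. This is not a harmless deferral. For a general weight-$(k-2)$ Eisenstein series $Y$, the product $E_{2,0}\,Y$ is \emph{not} orthogonal to cusp forms --- by Proposition~\ref{proposition2.5} its pairing with $f$ is an explicit nonzero combination of periods of $f$ (equivalently, Rankin--Selberg special values), so any proof along your lines would have to exploit delicate cancellation in your specific $Y$, which you have not exhibited. You also misread where the difficulty lies: the ``extra idea needed for higher weights'' flagged in the introduction concerns the more general traces discussed at the end of Section~\ref{section4}, not this corollary, which the paper proves completely.

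The reason you hit this obstacle is your choice of holomorphization. Replacing each weight-$2$ factor $E_{2,\nu}$ on the right-hand side by $E_{2,\nu}-E_{2,0}$ is a valid way to make the expression holomorphic, but the error term $E_{2,0}\cdot Y$ it produces does not lie in $\cuspforms_k^\perp$. The paper instead holomorphizes the level-$M$ combination coming out of Proposition~\ref{proposition4.1} using the relations of Section~\ref{section3}: expression~\eqref{equation3.4} and the discussion following it show that each nonholomorphic product $Y_\lambda B_\mu$ equals $Y_\lambda(B_\mu-B_\lambda)$ plus $B_\lambda Y_\lambda$, and that $B_\lambda Y_\lambda$ is congruent modulo $\cuspforms_k^\perp$ to an explicit holomorphic combination of the other products $A_\lambda Z_\lambda,\dots$ at the same level. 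These congruences supply exactly the elements of $\cuspforms_k^\perp$ needed to pass from the nonholomorphic level-$M$ expression to a holomorphic one \emph{without} introducing an unverified orthogonality claim; the difference between the original (holomorphic) modified sum and this new holomorphic expression is then holomorphic and orthogonal to cusp forms, hence an Eisenstein series. To repair your argument, replace your $\tilde R$ by the holomorphization obtained from~\eqref{equation3.4}, or else supply an actual proof that your particular $E_{2,0}\cdot Y$ is orthogonal to all cusp forms.
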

\begin{proof}
Write each $\tilde{E}_2$ as a difference of two $E_2$'s, and take the
sum over $\tau$ of each product using the result of
Proposition~\ref{proposition4.1}.  (Sums such as $\sum_\tau
E_{2,\lambda+\tau} E_{2,0}$ or $\sum_\tau E_{2,0} E_{m,\mu - S\tau}$,
which only involve $\tau$ in one factor, are easy to simplify.)  This
involves taking coefficients of some monomials like 
$\overline{a}^\ell \overline{b}^m$ in the result of the previous
proposition.  This shows that the final answer, which is holomorphic,
is congruent modulo 
$\cuspforms_k^\perp$ to some linear combination of products of pairs
of (possibly nonholomorphic, if weight~$2$) Eisenstein series of
level~$M$.  But now modify the result by suitable combinations of
expressions like~\eqref{equation3.4} to obtain a new holomorphic
expression, expressed in terms of level $M$, that is congruent to our
desired sum modulo $\cuspforms_k^\perp$.  Then the difference between
our original desired sum and the new holomorphic expression is both
holomorphic and orthogonal to cusp forms, so must be in the desired
Eisenstein space of level $M$.
\end{proof}

We conclude this article by raising the question of whether one can
also generalize the proofs of Proposition~4.8 and the second half of
Proposition~4.11 in~\cite{KKMModuliInterpretation}; these results show
that more general traces of products of Eisenstein series, not just
those in~\eqref{equation4.1}, can again be written as combinations of
such products at lower level.  Generalizing our earlier proofs appears
to require a better understanding of the effect of a certain ``Fourier
transform'' on spaces of Eisenstein series, along the lines of
Proposition~4.3 of that article, alongside a similar theory of
relations between products of Eisenstein series in the image of this
Fourier transform.  With our current state of knowledge, the stronger
results in Section~4 of~\cite{KKMModuliInterpretation} can be proved
directly only for weights~2 and~3, as is done in that article.  The
reason for this is that any product of two Eisenstein series in such
low weight must include one Eisenstein series of weight~1; this saves
us in our earlier article, because the Fourier symmetry on weight~1
Eisenstein series is essentially the identity transform.





\providecommand{\bysame}{\leavevmode\hbox to3em{\hrulefill}\thinspace}
\providecommand{\MR}{\relax\ifhmode\unskip\space\fi MR }
\providecommand{\MRhref}[2]{%
  \href{http://www.ams.org/mathscinet-getitem?mr=#1}{#2}
}
\providecommand{\href}[2]{#2}


\end{document}